\theoremstyle{theorem}
\newtheorem{theorem}{Theorem}[section]
\newtheorem{claim}[theorem]{Claim}
\newtheorem{proposition}[theorem]{Proposition}
\newtheorem{lemma}[theorem]{Lemma}
\theoremstyle{remark}
\newtheorem{remark}[theorem]{Remark}
\theoremstyle{definition}
\newcommand{\up}[1]{^{(#1)}}
\newenvironment{prf}{\noindent{\bf Proof.~}}{\(\qed\)}
\newcommand{\BPF}{\begin{prf}} 
\newcommand {\EPF}{\end{prf}}
\def\R{{\mathbb{R}}}
\def\C{{\mathbb{C}}}
\def\E{{\mathbb{E}}}
\def\Pr{\mathbb{P}}
\newcommand{\expon}{\textnormal{exp}}
\newcommand{\tensor}{\otimes}
\newcommand{\eps}{\epsilon}
\newcommand{\vect}{\textnormal{vec}}
\newcommand{\tr}{\textnormal{tr}}
\renewcommand{\P}{\mathbb{P}}
\renewcommand{\i}{\mathbf{i}}
\newcommand{\wt}{\widetilde}
\newcommand{\ov}{\overline}
\begin{document}

\title{A Matrix Expander Chernoff Bound}
\author{
	Ankit Garg\thanks{Microsoft Research New England,
	\texttt{garga@microsoft.com}.}
\and
Yin Tat Lee\thanks{University Washington, \texttt{yintat@uw.edu}.
This work was partially supported by NSF grant CCF-1740551.}
\and
Zhao Song\thanks{Harvard University \& UT-Austin, \texttt{zhaos@utexas.edu}. }
\and
Nikhil Srivastava\thanks{UC Berkeley, \texttt{nikhil@math.berkeley.edu}.
This work was partially supported by NSF grant CCF-1553751 and a Sloan Research Fellowship.}
}

\begin{titlepage}
\maketitle

\begin{abstract}

We prove a Chernoff-type bound for sums of matrix-valued random variables
sampled via a random walk on an expander, confirming a conjecture due to Wigderson
and Xiao \cite{wxretraction}.
Our proof is based on a new multi-matrix extension of the Golden-Thompson
inequality which improves in some ways the inequality in \cite{Sutter2017} and may be
of independent interest, as well as an adaptation of an argument for the scalar case due to Healy
\cite{healy08}. Secondarily, we also provide a generic reduction showing that
any concentration inequality for vector-valued martingales implies a
concentration inequality for the corresponding expander walk, with a weakening
of parameters proportional to the squared mixing time.

\end{abstract}
\thispagestyle{empty}
\end{titlepage}

\section{Introduction}

The Chernoff Bound \cite{chernoff} is one of the most widely used probabilistic results in
computer science. It states that a sum of independent bounded random
variables exhibits subgaussian concentration around its mean.  In particular,
when the random variables are i.i.d. samples from a fixed distribution, it implies
that the empirical mean of $k$ samples is  $\epsilon$-close to the true mean with exponentially
small deviation probability proportional to $e^{-\Omega(k\epsilon^2)}$. 

An important generalization of this bound was achieved by Gillman \cite{gillman}
(with refinements later by \cite{lezaud, kahale, LeonP04, wx05, healy08, Wagner08, ChungLLM12, RaoR17}), who
significantly relaxed the independence assumption to Markov dependence. In particular, suppose $G$ is a regular graph with vertex set $V=[n]$,
$X:V\rightarrow \C$ is a bounded function, and
$v_1,\ldots,v_k$ is a stationary random walk\footnote{That is the first vertex $v_1$ is chosen uniformly at random -- which is the stationary distribution of the graph $G$.} of length $k$ on $G$. Then, even
though the random variables $X(v_i)$ are in not independent (except when $G$ is the
complete graph with self loops), it is shown that:
\begin{equation}\label{eqn:expchernoff} \Pr \left[ \left|\frac1k\sum_{i=1}^k X(v_i) - \E [X ] \right|>\epsilon\right]\le
2 \cdot \exp(-\Omega((1-\lambda)k\epsilon^2)),\end{equation}
where $1-\lambda$ is the spectral gap of the transition matrix of the random
walk.  The gain here is that sampling a stationary random walk of length $k$ on
a constant degree graph with constant spectral gap requires $\log(n)+O(k)$ random bits, which is much
less than the $k\log(n)$ bits required to produce $k$ independent samples. Since
such graphs can be explicitly constructed,  this leads to a generic ``derandomization'' of the Chernoff
bound, which has had several important applications (see \cite{wx05} for a detailed discussion). 
In particular, it leads to the following randomness efficient sampler for scalar-valued functions (\cite{gillman}) using known strongly explicit constructions of expander graphs \cite{rvw2000, lps88}:

\begin{theorem}[\cite{gillman}] For any $\eps > 0$ and $k \ge 1$, there is a $\textnormal{poly}(r)$-time computable sampler $\sigma: \{0,1\}^r \rightarrow [n]^k$, where $r = \log(n) + O(k)$ s.t. for all functions $f: [n] \rightarrow [-1,1]$ satisfying $\E f=0$, we have that
$$
\Pr_{w \in_R \{0,1\}^r} \left[  \bigg|\frac{1}{k}\sum_{i=1}^k f(\sigma(w)_i) \bigg| \ge \eps \right] \le 2\exp\left(-\Omega\left( -\eps^2 k\right)\right)
$$ 
\end{theorem}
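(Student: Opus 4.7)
The plan is to instantiate the abstract expander Chernoff bound in equation (\ref{eqn:expchernoff}) with a specific strongly explicit family of expanders, and then count random bits. The theorem is essentially a packaging result: once one has both (\ref{eqn:expchernoff}) and a good explicit expander, the statement follows directly.

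First, I would fix a strongly explicit family of $d$-regular graphs $G=G_n$ on vertex set $[n]$ of constant degree $d$ and constant spectral gap $1-\lambda = \Omega(1)$; such families exist by the Ramanujan construction of \cite{lps88} or the zig-zag construction of \cite{rvw2000}. For concreteness I would arrange $d$ to be a power of two (e.g.\ by taking a constant power of an expander, which preserves both constant degree and constant spectral gap), so that sampling a uniformly random neighbor costs exactly $\log_2 d = O(1)$ bits with no waste. Strong explicitness means that given $v \in [n]$ and an index $i \in [d]$, the $i$-th neighbor of $v$ can be computed in time $\mathrm{poly}(\log n)$.

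Next, I would define the sampler $\sigma: \{0,1\}^r \to [n]^k$ as follows. Parse the input $w$ as a prefix of $\lceil \log_2 n \rceil$ bits encoding a starting vertex $v_1 \in [n]$, followed by $k-1$ blocks of $\log_2 d$ bits each, where the $j$-th block encodes which of the $d$ neighbors to move to at step $j+1$. When $w$ is drawn uniformly, this reproduces exactly the distribution of a stationary random walk $v_1,v_2,\ldots,v_k$ on $G$, because the uniform distribution on $[n]$ is the stationary distribution of the $d$-regular graph and neighbor choices are uniform and independent across steps. The total number of bits is $r = \lceil \log_2 n \rceil + (k-1)\log_2 d = \log n + O(k)$, and $\sigma(w)$ can be computed in time $\mathrm{poly}(r)$ by repeatedly invoking the explicit neighbor function.

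Finally, I would apply (\ref{eqn:expchernoff}) with $X=f$, using that $f$ is $[-1,1]$-valued with $\E f = 0$ and that $1-\lambda = \Omega(1)$ is constant, to conclude
\[
\Pr_{w}\!\left[\,\bigg|\tfrac{1}{k}\sum_{i=1}^k f(\sigma(w)_i)\bigg| \ge \eps\right]
\;=\; \Pr\!\left[\,\bigg|\tfrac{1}{k}\sum_{i=1}^k f(v_i) - \E f\bigg| \ge \eps\right]
\;\le\; 2\exp\!\big(-\Omega((1-\lambda) k \eps^2)\big) \;=\; 2\exp\!\big(-\Omega(k\eps^2)\big),
\]
which is the claimed bound. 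There is no real obstacle; the only subtleties are choosing an expander whose degree is a power of two (so that $r = \log n + O(k)$ bits suffice exactly) and invoking strong explicitness to guarantee that $\sigma$ runs in time $\mathrm{poly}(r)$.
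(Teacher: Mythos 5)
Your proposal is correct and matches the argument the paper alludes to: the paper does not supply its own proof of this statement but cites Gillman and notes that it follows by combining the expander Chernoff bound \eqref{eqn:expchernoff} with strongly explicit constant-degree expanders from \cite{rvw2000,lps88}, which is precisely the packaging you carry out. The only minor wrinkles you rightly flag (arranging the degree to be a power of two, and strong explicitness to get $\mathrm{poly}(r)$ time) are standard and do not affect correctness.
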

In many applications of interest $k$ is about $\log(n)$, and going from
$O(\log^2(n))$ to $O(\log(n))$ random bits leads to a complete derandomization
by cycling over all seeds $w\in\{0,1\}^r$.

A different generalization of the Chernoff bound appeared in the works of
Rudelson \cite{rudelson}, Ahlswede-Winter \cite{ahlswedewinter}, and Tropp
\cite{tropp}, who showed that a similar
concentration phenomenon is true for {\em matrix-valued} random variables. In particular, if
$X_1,\ldots,X_k$ are independent $d\times d$ complex Hermitian random matrices with $\|X_i\|\le 1$,
then the following is true: 
\begin{equation}\label{eqn:matrixchernoff} \Pr \left[ \left\|\frac1k\sum_{i=1}^k X_i - \E [ X ] \right\|>\epsilon\right]\le
2d\cdot \exp(-\Omega(k\epsilon^2)).\end{equation}
The only difference between this and the usual Chernoff bound is the factor of
$d$ in front of the deviation probability; to see that it is necessary, notice
that the diagonal case simply corresponds to a direct sum of $d$ 
arbitrarily correlated instances of the scalar Chernoff bound, so by the union
bound the probability should be $d$ times as large in the worst case. This so
called ``Matrix Chernoff Bound'' has seen several applications as well, notably
in quantum information theory, numerical linear algebra, and spectral graph
theory; the reader may consult e.g. the book \cite{tropp2015book} for many
examples. 

We present two different extensions of the above results in this paper.

\subsection{A Matrix Expander Chernoff Bound}
It is natural to wonder whether there is a common
generalization of \eqref{eqn:expchernoff} and \eqref{eqn:matrixchernoff}, i.e.,
a ``Matrix Expander Chernoff Bound''. Such a result was conjectured by Wigderson
and Xiao in \cite{wxretraction} --- in fact, \cite{wx05} contained a proof of it, but the authors
later discovered a gap in the proof. In this paper, we prove the Wigderson and Xiao conjecture, namely:
\begin{theorem}\label{thm:main} Let $G=(V,E)$ be a regular undirected graph whose transition matrix has second eigenvalue $\lambda$, 
	and let $f:V \rightarrow \C^{d \times d}$ be a function such that:
\begin{enumerate}
\item For each $v \in V$, $f(v)$ is Hermitian and $\|f(v)\| \le 1$. 
\item $\sum_{v \in V} f(v) = 0$. 
\end{enumerate}
 Then, for a stationary random walk $v_1,\ldots,v_k$ with $\epsilon \in (0,1)$ we have:
\begin{align*}
&\Pr\left[ \lambda_{\textnormal{max}} \left( \frac{1}{k}\sum_{j=1}^k f(v_j)\right) \ge ~~ 
\eps\right] \le d \cdot \expon\left(-\Omega\left( \eps^2 (1 - \lambda)
k \right) \right), \\
&\Pr\left[ \lambda_{\textnormal{min}} \left( \frac{1}{k} \sum_{j=1}^k f(v_j)\right) \le -\eps\right] \le d \cdot \expon\left(-\Omega\left( \eps^2 (1 - \lambda)
k \right) \right).
\end{align*} 
\end{theorem}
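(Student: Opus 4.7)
The plan is to apply the exponential moment method: by Markov's inequality applied to $\tr \exp(t \sum_{j=1}^k f(v_j))$, it suffices to prove an MGF-type bound of the form
\[
\E \tr \exp\Bigl(t \sum_{j=1}^k f(v_j)\Bigr) \le d \cdot \exp\bigl(C\, t^2 k / (1-\lambda)\bigr)
\]
for some absolute constant $C$ and all sufficiently small $t > 0$. Setting $t \sim \eps(1-\lambda)$ then yields the stated bound on $\lambda_{\max}$, with the $\lambda_{\min}$ bound following by replacing $f$ with $-f$. The main difficulty is that, unlike the i.i.d.\ matrix case handled by Ahlswede-Winter/Tropp, the summands $f(v_j)$ are correlated through the expander walk; and unlike the scalar expander case of Gillman/Healy, the summands do not commute, so the naive identity $\exp(\sum t f(v_j)) = \prod \exp(t f(v_j))$ fails.

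To handle non-commutativity I would invoke a multi-matrix extension of the Golden-Thompson inequality, in the spirit of (and ideally sharpening) the inequality of \cite{Sutter2017}. The clean form one would aim for is
\[
\tr \exp\Bigl(\sum_{j=1}^k H_j\Bigr) \le \int_{-\infty}^\infty \tr\bigl(T(\tau)^\ast T(\tau)\bigr)\, d\mu(\tau), \qquad T(\tau) = \prod_{j=1}^k e^{(1+\mathbf{i}\tau) H_j / 2},
\]
where $\mu$ is a fixed probability measure on $\R$. Setting $H_j = t f(v_j)$ reduces the MGF bound to controlling, for each fixed $\tau$, the expectation over the walk of the positive quadratic form $\tr(T(\tau)^\ast T(\tau))$; the outer integral against $\mu$ is then absorbed by convexity (Jensen) since all terms are nonnegative.

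The next step is a matrix adaptation of Healy's spectral-gap argument. For each fixed $\tau$, the expectation $\E_{v_1,\ldots,v_k}\tr(T(\tau)^\ast T(\tau))$ can be rewritten as $\langle \psi_0, \mathcal{L}_\tau^{\,k-1} \psi_0\rangle$ (up to normalization) for a positive operator $\mathcal{L}_\tau$ on the Hilbert space $\C^V \otimes M_d(\C)$ with a natural inner product. Concretely, $\mathcal{L}_\tau$ interleaves one step of the expander transition $P$ with conjugation by the Hermitian-related factors $e^{(1 \pm \mathbf{i}\tau) t f(v)/2}$, and $\psi_0 = (\mathbf{1}/\sqrt{n}) \otimes (I_d/\sqrt{d})$ is the stationary unit vector. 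At $t = 0$ the operator is $P \otimes \mathrm{id}$, whose top eigenvalue is $1$ (attained at $\psi_0$) with spectral gap $1 - \lambda$ inherited from $G$. For small $t$, $\|f(v)\| \le 1$ bounds the perturbation, while the mean-zero hypothesis $\sum_v f(v) = 0$ kills the first-order term, so a Kato-style second-order perturbation argument gives $\|\mathcal{L}_\tau\| \le 1 + O(t^2/(1-\lambda))$. Raising this to the $(k{-}1)$-st power and folding into the exponential produces the claimed MGF bound; the factor of $d$ arises from $\langle I_d, I_d\rangle = d$ in the normalization of the initial vector.

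The main obstacle is establishing and exploiting the right multi-matrix Golden-Thompson inequality. The Sutter-Berta-Tomamichel bound is phrased in logarithmic form and passes the convexity correction through Jensen, which is slightly wasteful; a direct trace inequality tailored to this application, as promised by the abstract, seems necessary to keep constants clean. A secondary challenge is controlling the perturbation bound uniformly in $\tau$ so that the integral against $d\mu(\tau)$ converges to a quantity independent of $d$, $n$, and $k$: the $\tau$-dependence enters through complex phases in the exponents, and one must verify that the leading $O(t^2/(1-\lambda))$ estimate survives this complex rotation. Finally, because $\mathcal{L}_\tau$ is not self-adjoint for $\tau \ne 0$, the perturbation step must be carried out either via a similarity to a self-adjoint operator (using the palindromic structure of $T(\tau)^\ast T(\tau)$) or via a direct Cauchy-Schwarz estimate that trades non-normality for a factor already accounted for in the constants.
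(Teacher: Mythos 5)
Your high-level plan matches the paper's: Markov's inequality on $\tr\exp(t\sum f(v_j))$, then a multi-matrix Golden--Thompson inequality to replace $\exp(\sum H_j)$ by a positive product $T^*T$, then an $\E_{\text{walk}} \to \langle z_0, (E\widetilde P)^k z_0\rangle$ rewriting on $\C^{nd^2}$, then a spectral-gap estimate. But two of the steps you describe are not merely technical ``challenges'' to be cleaned up; they are precisely the places where the paper had to do something new, and as written they leave genuine gaps.

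First, the Golden--Thompson inequality. You invoke the Sutter--Berta--Tomamichel form with $T(\tau)=\prod_j e^{(1+\mathbf{i}\tau)H_j/2}$ integrated over all of $\R$, and you describe controlling the $\tau$-dependence as a ``secondary challenge.'' It is in fact fatal: the complex factor $1+\mathbf{i}\tau$ has modulus $\sqrt{1+\tau^2}\to\infty$, so when you feed $H_j = t f(v_j)$ into the downstream spectral-gap estimate, the effective perturbation parameter $\ell = \sup_v \|H_v\|$ grows without bound in $\tau$, and no uniform-in-$\tau$ estimate of the form $1+O(t^2/(1-\lambda))$ can survive integration against $d\mu(\tau)$ over the full line. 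Truncating the integral introduces a loss that ruins the constants. This is exactly why the paper proves a new bounded-region Golden--Thompson inequality (Theorem~\ref{thm:our_gt}): the coefficient becomes $e^{\mathbf{i}\phi}$ with $\phi \in [-\pi/2,\pi/2]$, so $|e^{\mathbf{i}\phi}|=1$ uniformly, and the parameters $\gamma,b$ with $\gamma^2+b^2=1$ feed into the estimate with $\|H_v\|\le 1$. Your proposal would need to prove that bounded-region inequality (the Riemann map of the half-disk, Lemma~\ref{lem:half_disk_estimate}, and Theorem~\ref{thm:better_integral}), not assume it.

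Second, the spectral-gap step. You propose a ``Kato-style second-order perturbation argument'' showing $\|\mathcal L_\tau\| \le 1+O(t^2/(1-\lambda))$, then acknowledge in the final sentence that $\mathcal L_\tau$ is not self-adjoint and gesture at ``similarity'' or ``Cauchy--Schwarz.'' This gesture does not close the gap: the top eigenvalue of $P\otimes \mathrm{id}$ is $1$ with a $d^2$-dimensional eigenspace, the perturbation is neither Hermitian nor normal, and eigenvalue perturbation theory for a degenerate eigenvalue of a non-normal operator does not control the operator norm (one must worry about pseudo-Jordan blocks, and about the distinction between spectral radius and norm when powering). This is the exact issue the paper flags as the reason for abandoning the perturbation-theoretic route of Wigderson--Xiao in favor of Healy's linear-algebraic decomposition (Lemma~\ref{lemma:healy}): one tracks $\|z_i^\parallel\|$ and $\|z_i^\perp\|$ directly through four inequalities for the four blocks of $E\widetilde P$, using only submultiplicativity, $\|E-I\|\le e^{t\ell}-1$, and the expander bound $\|\widetilde P z^\perp\|\le\lambda\|z^\perp\|$. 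That argument is elementary and makes no diagonalizability assumption. Your proposal needs that decomposition (or an equivalent worked-out Cauchy--Schwarz scheme), not a perturbation-theoretic black box.
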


This theorem adds to the amazingly long list of pseudorandom properties of expander graphs. By applying the theorem with
a strongly explicit bounded degree expander, one obtains the following randomness-efficient sampler for matrix-calued functions conjectured in \cite{wxretraction}.
\begin{theorem}\label{thm:derand} For any $\eps > 0$, $k \ge 1$ and $d \ge 1$, there is a $\textnormal{poly}(r)$-time computable sampler $\sigma: \{0,1\}^r \rightarrow [n]^k$, where $r = \log(n) + O(k)$ s.t. for all functions $f: [n] \rightarrow \C^{d \times d}$ satisfying $\E f=0$ and for each $v \in [n]$, $f(v)$ is Hermitian and $\|f(v)\| \le 1$, we have that
$$
\Pr_{w \in_R \{0,1\}^r} \left[  \left\| \frac{1}{k}\sum_{i=1}^k f(\sigma(w)_i) \right\| \ge \eps \right] \le 2d \exp\left(-\Omega\left( -\eps^2 k\right)\right)
$$ 
\end{theorem}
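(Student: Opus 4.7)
The plan is to instantiate the abstract bound of Theorem \ref{thm:main} with a concrete strongly explicit bounded-degree expander graph $G$ on $n$ vertices whose transition matrix has second-largest eigenvalue bounded by a fixed constant $\lambda < 1$. Such a graph can be obtained in polynomial time from the Ramanujan construction of Lubotzky--Phillips--Sarnak \cite{lps88} or, for every $n$, from the zig-zag product of Reingold--Vadhan--Wigderson \cite{rvw2000}. Given such a $G$ of constant degree $D$, the sampler $\sigma : \{0,1\}^r \to [n]^k$ uses the first $\log n$ bits of its seed $w$ to pick a uniformly random starting vertex $v_1$ and then $\log_2 D = O(1)$ bits per step to choose one of the $D$ neighbors, so that the output $\sigma(w) = (v_1,\ldots,v_k)$ is a stationary random walk on $G$ that consumes $r = \log n + O(k)$ bits. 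Strong explicitness of the underlying graph guarantees that each $\sigma(w)_i$ can be computed in $\textnormal{poly}(r)$ time.

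To deduce the tail bound, I would observe that under the uniform distribution on $[n]$ the hypothesis $\E f = 0$ is exactly $\sum_{v \in V} f(v) = 0$, and the remaining hypotheses on $f$ (Hermitian, $\|f(v)\| \le 1$) coincide with those of Theorem \ref{thm:main}. Applying both halves of Theorem \ref{thm:main} with the fixed spectral gap $1 - \lambda = \Omega(1)$ yields
$$
\Pr\left[ \lambda_{\textnormal{max}}\left( \frac{1}{k}\sum_{i=1}^k f(\sigma(w)_i) \right) \ge \eps \right] \le d \cdot \exp\!\left( -\Omega(\eps^2 k) \right),
$$
and an identical bound for the event $\lambda_{\textnormal{min}} \le -\eps$. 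For any Hermitian matrix $M$ one has $\|M\| = \max(\lambda_{\textnormal{max}}(M), -\lambda_{\textnormal{min}}(M))$, so a union bound over these two events produces the claimed estimate with the extra factor of $2$ in front of $d$.

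The content of the theorem is essentially a packaging of Theorem \ref{thm:main}: once the matrix expander Chernoff bound is in hand, the derandomization reduces to verifying that a suitable strongly explicit expander exists. The only points requiring any care are standard: (i) matching the vertex count of the expander to an arbitrary $n$, which is handled by padding to the nearest admissible size and extending $f$ by zero on the added vertices (at a negligible cost in the constants), and (ii) having constructions with explicitly controlled degree and spectral gap, both of which are provided by the cited references. Neither point affects the scaling of parameters, so the real work is not in this theorem but in Theorem \ref{thm:main} itself.
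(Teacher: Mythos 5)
Your proposal takes the same route as the paper, which devotes only a single sentence to this result (``by applying the theorem with a strongly explicit bounded degree expander\ldots''), and your fleshed-out version is correct in its main points: the seed accounting ($\log n$ bits for the start plus $O(1)$ bits per step), the observation that $\E f = 0$ under the uniform distribution is exactly $\sum_{v} f(v) = 0$, the invocation of both tails of Theorem~\ref{thm:main} with constant spectral gap $1-\lambda = \Omega(1)$, and the union bound that produces the factor $2$ via $\|M\| = \max(\lambda_{\max}(M), -\lambda_{\min}(M))$.

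One detail you should repair: the remark that mismatched vertex counts are ``handled by padding to the nearest admissible size and extending $f$ by zero on the added vertices'' does not work as stated. The sampler is required to map into $[n]^k$, and the adversary chooses $f$ on $[n]$; a walk step landing in $[n']\setminus[n]$ must still be reported as some vertex of $[n]$, and whatever vertex you report there will be evaluated by $f$ --- you cannot force it to contribute zero. The standard fix is to clone rather than pad: choose a strongly explicit expander on $n'$ vertices where $n'$ is (roughly) a multiple of $n$, fix a balanced projection $\pi\colon [n'] \to [n]$, set $\sigma(w)_i = \pi(v_i)$, and apply Theorem~\ref{thm:main} to $f\circ\pi$, which remains Hermitian, norm-bounded by $1$, and mean-zero on $[n']$. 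This keeps the sampler's range in $[n]$, preserves $\frac{1}{k}\sum_i f(\sigma(w)_i) = \frac{1}{k}\sum_i (f\circ\pi)(v_i)$, and costs nothing asymptotically. With that substitution your argument is complete.
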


We remark that while the derandomization applications studied in \cite{wx05}
were later recovered in \cite{WX08} using the method of pessimistic estimators,
that method requires additional assumptions to be efficiently implementable
(specifically, computability of the matrix moment generating function, which is
problem-dependent) and therefore does not constitute a truly black box
derandomization of the matrix Chernoff bound, whereas Theorem \ref{thm:derand}
does. Given the increasing ubiquity of applications of this bound, we therefore
suspect that it will find further applications in the study of derandomization
and expander graphs, beyond the ones mentioned in \cite{wx05}.

\subsubsection*{Techniques}
To describe the ideas that go into the proof of Theorem \ref{thm:main}, let us begin by recalling how the usual scalar
Chernoff bound is proved, in the case when the random variables have mean zero.
The key observation is that if $X_1,\ldots,X_k$ are
independent random variables, then the moment generating function of the sum is equal to the
product of the moment generating functions: 
$$
\E \left[ \exp\left(t\sum_{i=1}^k X_i \right) \right] =
\prod_{i=1}^k \E[ \exp(tX_i) ].
$$
This is no longer true in case where the $X_i$
come from a random walk, but we still have the algebraic fact that
\begin{equation}\label{eqn:commute} 
\exp\left(t\sum_{i=1}^k X_i \right) = \prod_{i=1}^k \exp(tX_i),
\end{equation} 
which allows one to decompose the sum as a product.
The latter allows one to consider the steps of the random walk separately and
analyze the change in the expectation inductively.

The analogue of the moment generating function in the matrix setting is 
$$\E \left[ \tr\left[ \exp \left(t\sum_{i=1}^k X_i \right) \right] \right],$$
and the main difficulty is that \eqref{eqn:commute} no longer holds if the
matrices $X_i$ do not commute. A substitute for this fact is given by the
Golden-Thompson inequality \cite{golden,thompson}, which states that for any
Hermitian $A,B$:
\begin{equation}\label{eqn:gt}
\tr[ \exp ( A + B ) ] \le \tr[ \exp(A) \exp(B) ] .
\end{equation}
The latter expression may further be bounded by
$\|\exp(A)\| \tr[ \exp(B) ]$, and this is sufficient to prove \eqref{eqn:matrixchernoff} in the independent case
as is done in \cite{ahlswedewinter},
where an inductive application of it yields 
$$ \E \left[ \tr\left[\exp\left(t\sum_{i=1}^k X_i \right) \right] \right] \le \tr[I] \cdot\prod_{i=1}^k \left\|\E [ \exp(tX_i) ] \right\|.$$
However, this approach is too crude to handle the Markov case, roughly because
in the absence of inependence, passing to the norm makes it difficult to utilize
the fact that the expectation of each $X_i$ is zero.

The original proof of Wigderson-Xiao was based on the following plausible
multi-matrix generalization of \eqref{eqn:gt}:
$$\tr\left[\exp\left(\sum_{i=1}^k A_i \right) \right]\le \tr\left[\prod_{i=1}^k \exp(A_i) \right],$$
which turns out to be false for $k>2$. To see why, observe that the left hand
side is always nonnegative, whereas the right hand side can be the trace of a
product of any three positive semidefinite matrices, which can be negative (and
this is not the case for two matrices). This led to a fatal gap in their proof.

The main ingredient in our proof is a new multi-matrix generalization of
\eqref{eqn:gt}, which is inspired by the following statement that was recently
proven in \cite{Sutter2017} (see also \cite{HKT16}).

\begin{theorem}[Corollary 3.3 in \cite{Sutter2017}]\label{thm:generalGT} Let $H_1,\ldots,H_k \in \C^{d \times d}$ be Hermitian matrices. Then
$$
\log \left[ \tr\left( \expon\left( \sum_{j=1}^k H_j \right)\right)\right] \le \int_{-\infty}^{\infty} \log\left[ \tr\left( \prod_{j = 1}^k \expon\left( \frac{H_j(1 + \i b)}{2}\right) \prod_{j = k}^1 \expon\left( \frac{H_j(1 - \i b)}{2}\right)\right)\right] \mathrm{d} \mu (b)
$$
where $\mu$ is some probability distribution on $(-\infty, \infty)$.
\end{theorem}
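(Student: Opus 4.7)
The plan is to establish the inequality via complex interpolation, specifically using the Stein-Hirschman three-lines theorem applied to a suitable matrix-valued analytic function, together with Hirschman's probabilistic improvement that replaces the usual $(1-\theta)$--$\theta$ weighting by an integral against a probability measure on $\R$. This generalizes the standard complex-analytic proof of the two-matrix Golden-Thompson inequality \eqref{eqn:gt} to $k$ matrices; the measure $\mu$ appearing in the conclusion arises as a harmonic measure on the boundary of an appropriate vertical strip.

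I would begin by introducing the entire operator-valued function $F(z) := \prod_{j=1}^{k} \expon(z H_j)$ and identifying the RHS integrand. Since each $H_j$ is Hermitian we have $F(z)^{\ast} = \prod_{j=k}^{1} \expon(\bar z H_j)$, so at $z = (1+\i b)/2$ the cyclicity of the trace yields the algebraic identity
$$\left\|F\!\left(\tfrac{1+\i b}{2}\right)\right\|_2^2 \;=\; \tr\!\left(\prod_{j=1}^k \expon\!\left(\tfrac{H_j(1+\i b)}{2}\right) \prod_{j=k}^{1} \expon\!\left(\tfrac{H_j(1-\i b)}{2}\right)\right),$$
so the integrand on the RHS of the theorem is precisely $\log\|F((1+\i b)/2)\|_2^2$, the (log of the squared) Schatten-$2$ norm of $F$ on the vertical line $\mathrm{Re}(z) = 1/2$. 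On the imaginary axis $F(\i b)$ is a product of unitaries and hence itself unitary, so $\|F(\i b)\|_\infty = 1$ and its log vanishes; this will eventually kill the other boundary contribution.

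Next I would invoke log-subharmonicity of Schatten norms: for an operator-valued holomorphic $F$ on a vertical strip and any affine schedule $1/p(z)$ in $\mathrm{Re}(z)$, the map $z \mapsto \log \|F(z)\|_{p(z)}$ is subharmonic. Hirschman's refinement of the three-lines theorem then supplies probability measures on the two boundary lines so that the value at any interior vertical line is bounded above by the corresponding weighted integral of boundary values. Setting this up on the strip $\{0 \le \mathrm{Re}(z) \le 1/2\}$ with $p(0) = \infty$ and $p(1/2) = 2$, the left-boundary contribution vanishes by unitarity, and the right-boundary contribution is exactly the $d\mu$ integral on the RHS of the theorem.

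The main obstacle will be arranging that the interior-point value of the interpolation reproduces $\log \tr \expon(\sum_j H_j)$. Na\"ively, $F(\theta)$ at a real interior $\theta$ equals $\prod_j \expon(\theta H_j)$, a product of exponentials rather than the single exponential $\expon(\theta \sum_j H_j)$, and bridging this gap is the heart of the multi-matrix generalization. Two natural avenues I would try are: (a) applying the interpolation to an iterated Lie-Trotter construction such as $F_n(z) = \bigl(\prod_j \expon(zH_j/n)\bigr)^n$ and passing $n \to \infty$ with uniform control on the hypotheses of Stein-Hirschman, so that the interior value converges to $\expon(\sum_j H_j)$ in Schatten-2 while the boundary line retains its fixed form; or (b) replacing $F$ by an auxiliary analytic function designed from a Lieb-type integral representation that interpolates between $\expon(\sum_j H_j)$ at one endpoint and the product form at the other. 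Ensuring the uniform bounds required for the three-lines/Hirschman argument, and carefully tracking how the resulting measure on $\R$ emerges from the harmonic measure of the chosen strip, is the principal technical subtlety.
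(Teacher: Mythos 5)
The paper does not reprove this statement; it cites Corollary~3.3 of \cite{Sutter2017} and only sketches the proof strategy in Section~\ref{sec:gt} (before adapting it to the half-disk for Theorem~\ref{thm:our_gt}). So I am comparing your sketch against that description and against the analogous argument the paper carries out in full for Theorem~\ref{thm:better_gt}.

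Your high-level framing is right: complex interpolation of Stein--Hirschman type on a vertical strip, $F(\i b)$ unitary so the left boundary contributes nothing, and the right-boundary integrand being $\log\|F\|_2^2$ which coincides with the trace expression by cyclicity. But the mechanism you propose to close the gap between ``product of exponentials'' and ``exponential of the sum'' does not work. Your option~(a), replacing $F$ by $F_n(z)=\bigl(\prod_j\exp(zH_j/n)\bigr)^n$ and sending $n\to\infty$ at a \emph{fixed} interior point, destroys the boundary data you need: on $\Re(z)=\tfrac12$ the Lie--Trotter limit of $F_n$ is $\exp\bigl(\tfrac{1+\i b}{2}\sum_j H_j\bigr)$, a single exponential, not the ordered product $\prod_j\exp\bigl(\tfrac{1+\i b}{2}H_j\bigr)$ appearing in the statement. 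So the right-hand side of the theorem would not be what survives the limit.

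The actual argument (and the one the paper mimics for Theorem~\ref{thm:better_gt}) keeps $G(z)=\prod_j\exp(zH_j/2)$ fixed and instead sends the \emph{interior point} to zero while letting the Schatten exponent blow up. Concretely, with $p_0=\infty$ on $\Re(z)=0$ and $p_1=2$ on $\Re(z)=1$, the affine schedule gives $p_\theta = 2/\theta$, and
\[
\log \tr\Bigl[\exp\Bigl(\sum_j H_j\Bigr)\Bigr]=\lim_{\theta\to 0^+}\frac{2}{\theta}\log\bigl\|G(\theta)\bigr\|_{2/\theta},
\]
since $\|G(\theta)\|_{2/\theta}^{2/\theta}=\tr\bigl[(G(\theta)G(\theta)^*)^{1/\theta}\bigr]\to\tr\exp(\sum_j H_j)$ by Lie--Trotter. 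The Stein--Hirschman bound at the point $\theta$ puts weight of order $\theta$ on the $\Re(z)=1$ boundary (and the $\Re(z)=0$ boundary contributes nothing by unitarity), so multiplying by $2/\theta$ and letting $\theta\to0^+$ produces a fixed probability measure $\mu$ on the line $\Re(z)=1$ --- this is where your $\mu$ comes from, as the limit of the Hirschman kernels, not as the harmonic measure of the strip at a fixed interior point. Your Lie--Trotter limit and the three-lines bound must be taken simultaneously in the single parameter $\theta$, not sequentially with an independent $n$. One more small inaccuracy: $z\mapsto\log\|F(z)\|_{p(\Re z)}$ with a \emph{varying} Schatten exponent is not subharmonic in general; the Stein--Hirschman step requires the standard duality device of testing $G(z)$ against a suitably chosen holomorphic $X(z)$ with $\|X(x+\i y)\|_{q_x}\le1$ to reduce to the scalar subharmonic function $\log|\tr[X(z)^*G(z)]|$, exactly as the paper does in the proof of Theorem~\ref{thm:better_integral}.

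Your option~(b), a Lieb-type integral representation, is too vague to assess and is not the route used by \cite{Sutter2017}.
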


The above inequality successfully relates the matrix exponential of a sum to a
product of matrix exponentials, but is not adequate for proving an optimal
Chernoff bound. The reason is that all known arguments require a Taylor
expansion, and Theorem \ref{thm:generalGT} involves integration over an unbounded region (this
region can be truncated, but this introduces a loss which leads to a suboptimal bound). To remedy
this, we prove a new multi-matrix Golden-Thompson inequality, which only
involves integration over a bounded region instead of a line.
\begin{theorem}[Bounded Multi-matrix Golden-Thompson inequality]\label{thm:our_gt}
 Let $H_1,\ldots,H_k \in \C^{d \times d}$ be Hermitian matrices. Then
\begin{align*}
\log \left( \tr \left[ \exp \left( \sum_{j=1}^k H_j \right) \right] \right) \leq \frac{4}{\pi} \int_{-\frac{\pi}{2}}^{\frac{\pi}{2}} \log \left( \tr \left[ \prod_{j=1}^k \exp \left( \frac{e^{\i \phi}}{2} H_j \right) \prod_{j=k}^1 \exp \left( \frac{ e^{-\i \phi} }{2} H_j \right) \right] \right) \mathrm{d} \mu (\phi)
\end{align*}
where $\mu$ is some probability distribution on $[-\frac{\pi}{2},\frac{\pi}{2}]$.
\end{theorem}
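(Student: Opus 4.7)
The plan is to adapt the complex-interpolation strategy underlying Theorem~\ref{thm:generalGT}, replacing the unbounded vertical strip $\{0 < \mathrm{Re}(z) < 1\}$ with a bounded complex domain so that the resulting integral representation is over a bounded arc rather than over all of $\R$.

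First, I would set up the subharmonic auxiliary function. Define the matrix-valued entire function $P(z) := \prod_{j=1}^k \exp(zH_j/2)$ for $z\in\C$, and the scalar function $F(z) := \log\tr[P(z)P(z)^*] = \log\|P(z)\|_F^2$. Since each entry $P_{ab}(z)$ is entire in $z$, $\|P(z)\|_F^2 = \sum_{a,b}|P_{ab}(z)|^2$ is a sum of squared moduli of holomorphic functions, so by the classical fact that $\log\sum_\ell|g_\ell|^2$ is subharmonic for holomorphic $g_\ell$, $F$ is subharmonic on $\C$. Two key observations then match $F$ to the geometry of our target inequality: (i) for $z=e^{\i\phi}$ we have $P(z)^* = \prod_{j=k}^1 \exp(e^{-\i\phi}H_j/2)$, hence
$$F(e^{\i\phi}) = \log\tr\!\Bigl[\prod_{j=1}^k \exp(e^{\i\phi}H_j/2)\,\prod_{j=k}^1 \exp(e^{-\i\phi}H_j/2)\Bigr],$$
which is exactly the RHS integrand of the theorem; and (ii) for $z = \i y$ with $y\in\R$, each factor $\exp(\i yH_j/2)$ is unitary (since $H_j$ is Hermitian), so $P(\i y)$ is unitary and $F(\i y) = \log d$.

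Next I would apply a Poisson-type representation for subharmonic functions on the right half of the unit disk $\Omega := \{|z|<1,\ \mathrm{Re}\,z > 0\}$, whose boundary decomposes into the right semicircle $C = \{e^{\i\phi}:\phi\in[-\pi/2,\pi/2]\}$ (where $F$ matches the RHS integrand) and the imaginary diameter $D = \{\i y:y\in[-1,1]\}$ (where $F \equiv \log d$). A conformal map from $\Omega$ to the right half plane (for instance $z\mapsto \sinh(-\log z)$, which sends $C$ to the segment $\{\i y:y\in[-1,1]\}$ of the imaginary axis and $D$ to its complement there) transports the Poisson kernel of the half plane to an explicit harmonic measure on $\partial\Omega$. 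Evaluating at an interior point $z_0$ on the positive real axis and using $F\equiv \log d$ on $D$ yields
$$F(z_0)\;\le\;\int_{-\pi/2}^{\pi/2} F(e^{\i\phi})\,p(\phi;z_0)\,d\phi\;+\;(\log d)\cdot\omega_D(z_0),$$
and in the limit $z_0 \to 1^-$ the normalized density $p(\phi;z_0)$ becomes the probability measure $\mu$ on $[-\pi/2,\pi/2]$ and the overall normalization produces the prefactor $4/\pi$ (absorbing the trivial $\log d$ contribution into the semicircle integral).

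The main obstacle will be closing the gap $\log\tr\exp(\sum_j H_j) \le F(z_0)$ in the $z_0 \to 1^-$ limit. For $k=2$ the classical Golden-Thompson inequality gives $F(1) = \log\tr[e^{H_1}e^{H_2}] \ge \log\tr\exp(H_1+H_2)$ directly. For $k \ge 3$, however, the corresponding pointwise multi-matrix Golden-Thompson inequality is known to fail, so this step is delicate; the natural workaround is a Lie-Trotter refinement --- replace each $H_j$ by $n$ copies of $H_j/n$, apply the inequality at the refined level, and pass to $n\to\infty$, where the refined products $(\prod_j e^{zH_j/(2n)})^n$ converge to $\exp(z\sum_j H_j/2)$ and the bound $F_n(1)\to \log\tr\exp(\sum_j H_j)$ recovers the desired left-hand side. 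Handling this Lie-Trotter limit in conjunction with tracking the harmonic-measure computation to pin down exactly the prefactor $4/\pi$ and a clean probability measure $\mu$ is the delicate heart of the proof; the rest is essentially bookkeeping around subharmonicity on $\Omega$.
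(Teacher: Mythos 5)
Your overall geometric framework is on the right track---subharmonicity, the right half-disk with the semicircle carrying the target integrand and the diameter carrying the boundary bound, the Poisson/harmonic-measure estimate---and this is indeed the geometry the paper uses. But there is a genuine gap in how you connect the quantity $\log\tr\exp\bigl(\sum_j H_j\bigr)$ to your subharmonic function $F(z)=\log\|P(z)\|_F^2$, and the proposed patches do not close it.

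The paper's Lie--Trotter manipulation produces
\begin{align*}
\log\tr\Bigl[\exp\Bigl(\sum_j H_j\Bigr)\Bigr]
=\lim_{\theta\to 0^+}\frac{2}{\theta}\log\bigl\|P(\theta)\bigr\|_{2/\theta},
\end{align*}
i.e.\ it is the Schatten $p_\theta$-norm with $p_\theta=2/\theta\to\infty$ that appears, not the fixed Schatten-$2$ (Frobenius) norm. Your $F(z)=2\log\|P(z)\|_2$ keeps $p=2$ at every $z$, so evaluating at $z_0=\theta$ and letting $\theta\to 0$ just gives $F(\theta)\to\log d$, which is the trivial term you already put on the diameter, not the left-hand side. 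The paper's Riesz--Thorin argument exists precisely to interpolate the Schatten exponent across the half-disk: the auxiliary scalar $F(z)=\tr[X(z)^*G(z)]$, with $X(z)$ built from a polar decomposition of $G(\theta)$ and carrying a $z$-dependent power so that $\|X(x+\i y)\|_{q_x}=1$, is what forces $F(\theta)=\|G(\theta)\|_{p_\theta}$ at the interior evaluation point while still bounding $|F|$ by $\|G\|_{p_1}$ on the arc. Your construction has no analogue of this exponent interpolation, and that is the missing idea.

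The two workarounds you sketch also do not close the gap. (i) Replacing each $H_j$ by $n$ copies of $H_j/n$ and letting $n\to\infty$ replaces the products on the semicircle as well: $F_n(e^{\i\phi})\to\log\tr\bigl[\exp\bigl(e^{\i\phi}\tfrac12\sum H_j\bigr)\exp\bigl(e^{-\i\phi}\tfrac12\sum H_j\bigr)\bigr]=\log\tr\exp\bigl(\cos\phi\sum H_j\bigr)$, which is not the right-hand side of the theorem. So the refinement proves a different, essentially trivial statement rather than the one you want. (ii) Taking $z_0\to 1^-$ does not produce the $4/\pi$ normalization or a nondegenerate measure $\mu$: as $z_0\to 1^-$ the harmonic measure of the diameter tends to $0$ and the Poisson kernel on the semicircle concentrates at $\phi=0$, so the inequality degenerates to $F(1)\le F(1)$. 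The constant $4/\pi$ in the paper comes from the opposite limit $z_0=\theta\to 0^+$, where the harmonic mass the half-disk places on the semicircle is $\tfrac{4\theta}{\pi}+O(\theta^2)$ (Lemma~\ref{lem:half_disk_estimate}); the factor of $\theta$ is then cancelled by the $2/\theta$ from the Lie--Trotter/Schatten-norm identity. In short: you need the exponent-interpolating Riesz--Thorin step (Theorem~\ref{thm:better_integral}) and the $\theta\to 0^+$ asymptotics of the half-disk Poisson kernel; a fixed Frobenius norm with $z_0\to 1^-$ cannot reproduce either the left-hand side or the $4/\pi$ prefactor.
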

We present the proof in Section~\ref{sec:gt}.  Theorem~\ref{thm:our_gt} is likely to be
of independent interest and could have further applications, e.g. in quantum information theory. We draw attention to the following notable features of the above two theorems:
\begin{enumerate}
\item [(a)] Since $\exp(H(1+ \i b))=\exp(H(1- \i b))^*$ and $\exp \left( \frac{e^{\i \phi}}{2} H_j \right) = \exp \left( \frac{ e^{-\i \phi} }{2} H_j \right)^*$ for Hermitian $H$, the right
hand always considers the trace of a matrix times its adjoint, which is always positive
semidefinite, ruling out the bad example described above. 
\item [(b)] They are {\em average case} inequalities, where the averaging is done over specific distributions. We remark
that the first inequality is known to be false in the worst case (i.e., with $b=0$ and with other small values of $b$;
see \cite{Sutter2017} for a discussion). 
		
\end{enumerate}





The main point is that Theorem \ref{thm:our_gt} allows one to relate the
exponential of a sum of matrices to a (two-sided) product of bounded $d\times
d$ matrices and their adjoints. In order to prove Theorem \ref{thm:main}, we
rewrite this as a one-sided matrix product of $d^2\times d^2$ matrices acting
on $\C^{d\times d},$ by encoding left and right multiplication on this space
via a tensor product. However, these matrices are no longer Hermitian (or even
normal), so it is difficult to analyze the moment generating function of their
product over the random walk using the perturbation-theoretic approach of
\cite{wx05}. We surmount this difficulty by employing a variant of the more
robust linear algebraic proof technique of Healy \cite{healy08}.  The proof of
Theorem \ref{thm:main} is presented in Section \ref{sec:mainproof}.

\subsection{Martingale Approximation of Expander Walks}
While Theorem \ref{thm:main} provides a satisfactory generalization of the expander
Chernoff bound to the case when one is interested in the spectral norm of a
matrix-valued function on $V$, one could ask what happens for other matrix norms
(such as Schatten norms), or even more generally, for functions taking values in
an arbitrary Banach space. Our second contribution is a generic reduction from
this problem, of proving concentration for random variables sampled using a
{Markov chain}, to the much more well-studied problem (see e.g.
\cite{chungliu}) of concentration
for sums of {\em martingale} random variables. 

\begin{theorem}\label{thm:martingale} Suppose $G=(V,E)$ is a regular graph whose transition matrix has second eigenvalue $\lambda$
	and
	$f:V\rightarrow \R^N$ is a vector-valued function satisfying $\sum_{v\in
	V}f(v)=0$ with $F:=\sqrt{\sum_{v\in V}\|f(v)\|_2^2}$, where $\|\cdot\|_2$ denotes the Frobenius norm.
	If $v_1,\ldots,v_k$ is a stationary random walk on $G$,
	then for every $\epsilon>0$, there is a martingale difference sequence $Z_1,\ldots, Z_k$ with
	respect to the filtration generated by initial segments of
	$v_1,\ldots,v_k$ such that
	\begin{align*}
	\frac 1k\sum_{i=1}^k f(v_i) = W+\frac1k \sum_{i=1}^k Z_i,
	\end{align*} where 
	\begin{enumerate}
	\item $W$ is a random vector satisfying $\|W\|_2\le \epsilon$.
	\item 
		Each term $Z_i$ satisfies 
		$$\|Z_i\|_*\le \frac{2\log(F/\epsilon)}{1-\lambda}\cdot \max_{v\in V}\|f(v)\|_*$$
		for every norm $\|\cdot\|_*$.
	\end{enumerate}
\end{theorem}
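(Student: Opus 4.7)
The plan is a direct application of Gordin's martingale approximation scheme: approximately solve the Poisson equation $(I-P)g = f$ for the transition operator $P$ of the walk, so that the telescoping identity $f(v) = g(v) - (Pg)(v)$ turns the Markov sum $\sum_i f(v_i)$ into a sum of martingale differences plus a controllable remainder.

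First I would define the truncated resolvent $g := \sum_{t=0}^T P^t f$ with $T := \lceil \log(F/\epsilon)/(1-\lambda) \rceil$, so that $(I-P)g = f - h$ with $h := P^{T+1}f$. To bound $h$ pointwise I would view $f$ as an element of $\R^V \otimes \R^N$ on which $P$ acts as $P \otimes I_N$; the hypothesis $\sum_v f(v) = 0$ places $f$ in the non-top eigenspace, so $\sum_v \|(P^t f)(v)\|_2^2 \le \lambda^{2t}F^2$ and in particular $\max_v \|(P^t f)(v)\|_2 \le \lambda^t F$. With the chosen $T$ and the elementary inequality $\log(1/\lambda) \ge 1-\lambda$, this yields $\max_v \|h(v)\|_2 \le \epsilon$. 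For the same $g$, using that $P$ is an averaging operator (so $\|(P^t f)(v)\|_* \le \max_u \|f(u)\|_*$ for every norm $\|\cdot\|_*$), the triangle inequality gives $\max_v \|g(v)\|_* \le (T+1)\max_v \|f(v)\|_*$.

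Next I would set $Z_1 := g(v_1)$ and $Z_i := g(v_i) - (Pg)(v_{i-1})$ for $2 \le i \le k$. The martingale property $\E[Z_i \mid \mathcal{F}_{i-1}] = 0$ is immediate for $i \ge 2$, and for $i = 1$ it follows from $\sum_v g(v) = 0$, which $g$ inherits from $f$ under the uniform stationary distribution. Summing $f(v_i) = g(v_i) - (Pg)(v_i) + h(v_i)$ and rearranging telescopes to
$$\sum_{i=1}^k f(v_i) = \sum_{i=1}^k Z_i \;-\; (Pg)(v_k) \;+\; \sum_{i=1}^k h(v_i).$$
I would absorb the boundary term $(Pg)(v_k)$ into the martingale differences via its Doob decomposition $(Pg)(v_k) = \sum_{i=1}^k \bigl(\E[(Pg)(v_k)\mid\mathcal{F}_i] - \E[(Pg)(v_k)\mid\mathcal{F}_{i-1}]\bigr)$: each summand is itself a martingale difference of $\|\cdot\|_*$-norm at most $2\max_v \|g(v)\|_*$, so folding the $i$th summand into $Z_i$ preserves the martingale property and keeps $\|Z_i\|_* = O(T)\cdot\max_v \|f(v)\|_*$. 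Setting $W := \frac{1}{k}\sum_i h(v_i)$ then gives $\|W\|_2 \le \max_v \|h(v)\|_2 \le \epsilon$ by the triangle inequality.

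The main obstacles I expect are (i) converting the operator-theoretic $L^2$ spectral decay into the pointwise bound $\max_v \|(P^t f)(v)\|_2 \le \lambda^t F$, which is clean once one tensors with $\R^N$ and uses $\max \le$ Euclidean norm; and (ii) eliminating the boundary term $(Pg)(v_k)$ without inflating the norms of the $Z_i$'s. The Doob absorption above handles the latter at the cost of a universal constant factor, consistent with the constants in the theorem statement.
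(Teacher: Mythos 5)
Your proof is correct, and it is a genuine reorganization of the argument rather than a transcription. The paper iterates the one-step decomposition $f(v_i) = (f(v_i) - Pf(v_{i-1})) + Pf(v_{i-1})$ a total of $T$ times, obtaining martingale increments $Y_i^{(t)} = P^{t-1}f(v_i) - P^t f(v_{i-1})$ and the remainder $\frac{1}{k}\sum_{i=1}^{k-T}(P^T f)(v_i)$; you instead explicitly solve the Poisson equation via the truncated resolvent $g = \sum_{t=0}^T P^t f$, so that $f = (I-P)g + P^{T+1}f$ telescopes in one shot. The two are closer than they first appear: expanding the paper's $Z_i = \sum_{t=1}^{\min\{k+1-i,T\}} Y_i^{(t)}$ for $i \le k-T+1$ gives exactly $g_{T-1}(v_i) - Pg_{T-1}(v_{i-1})$ with $g_{T-1} = \sum_{t=0}^{T-1}P^t f$, i.e.\ the Gordin increment. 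Where you genuinely diverge is in handling the boundary: the paper's iteration shortens the remainder sum by one term per pass (the $\min$ in the definition of $Z_i$ encodes this), so no boundary term $Pg(v_k)$ ever appears; you instead absorb $Pg(v_k)$ into the martingale via its Doob decomposition, which is a standard and perfectly valid device. Your handling of the error term is also slightly different but equivalent in strength: you deduce the pointwise bound $\max_v \|(P^t f)(v)\|_2 \le \lambda^t F$ directly from $\max_v \le \ell_2$-sum, whereas the paper bounds $\|W\|_2$ via the aggregate $\ell_2$ decay \eqref{eqn:shrink}; both give $\|W\|_2 \le \epsilon$ with $T = \Theta(\log(F/\epsilon)/(1-\lambda))$. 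The net effect on the increment bound is that your Doob absorption contributes an extra factor of roughly $2$ over the stated constant, but the paper's own constant is comparably loose (its $\|Y_i^{(t)}\|_* \le M$ should read $\le 2M$), so this is cosmetic. What your route buys is transparency — it is recognizably Gordin's method, so it slots directly into the existing Markov-chain-CLT literature — at the price of the extra Doob step; what the paper's route buys is that the boundary term never materializes, keeping the bookkeeping entirely within the iterated one-step decomposition.
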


Thus, the empirical sums of any bounded (in any norm) function on a graph are
well-approximated by a martingale whose increments are also bounded, with a loss
in the bound depending on the $\ell_2$ norm $F$ of the function and the spectral gap of the
graph. Since $F$ will typically scale with the number of vertices, the ratio
above is typically comparable to the mixing time. 

To see the theorem in action, consider the case when $f(v)$ is matrix-valued
in $d\times d$ Hermitian matrices and $\|\cdot\|_*$ is the operator norm. If
$\|f(v)\| \le 1$ then 
we have the bound
$$F^2 =\sum_{v\in V}\|f(v)\|_F^2 \le dn.$$
Suppose we are interested in obtaining an estimate on the probability:
\begin{equation}\label{eqn:probmart}\P\left[\left\|\frac{1}{k}\sum_{v\in V}
f(v)\right\| >\epsilon\right].\end{equation}
Applying Theorem \ref{thm:martingale} with parameter $\epsilon/2$ and noting
that $\|W\| \le \|W\|_F$, we have that
\eqref{eqn:probmart} is at most 
$$ \P\left[\left\|\frac1k\sum_{i=1}^k Z_i\right\|>\epsilon/2\right],$$
where $Z_i$ is a martingale with bound
$$\|Z_i\| \le \frac{\log(nd/\epsilon)}{1-\lambda}.$$
We now appeal to the existing martingale generalization of \eqref{eqn:matrixchernoff} (see
e.g. \cite{tropp}) and find that this probability is at most
$$2d\cdot\exp\left(-\Omega\left(\frac{k\epsilon^2(1-\lambda)^2}{\log^2(nd)}\right)\right).$$

While this theorem is much weaker than the previous one in terms of parameters
(depending on the square of the mixing time rather than on the spectral gap), it
shows qualitatively that concentration for Markov chains is a generic
phenomenon rather than something specific to matrices. It also allows one to
instantly import the wealth of results regarding concentration for martingales
in various Banach spaces (see e.g., \cite{ledouxtalagrand}) to the random walk setting, albeit with suboptimal
parameters.  The simple proof of Theorem \ref{thm:martingale} is presented in Section \ref{sec:martingale}.

\section{Preliminaries}

For an $n \in {\mathbb{N}}_+$, let $[n]$ denote the set $\{1,2,\cdots,n\}$.
Let $\i$ denote $\sqrt{-1}$. For $z = a+ \i b$, where $a, b\in \R$, we define the complex conjugate of $z$ to be $\overline{z} = a - \i b$ and $|z| = \sqrt{a^2 + b^2}$. Then $|z|^2 = z \overline{z}$. We define real part $\text{Re}(z) = a$ and imaginary part $\text{Im(z)}=b$. Then $\text{Re}(z) = \frac{z+\overline{z}}{2}$ and $\text{Im}(z) = \frac{z-\overline{z}}{2\i}$.

We will be working with $D$-regular undirected graphs $G = (V,E)$. The number of
vertices of the graph, $V$ will be denoted by $n$. $A$ will denote the
adjacency matrix of the graph and $P = A/D$ will denote its normalized adjacency
matrix. A regular graph $G$ will be called a $\lambda$-expander ($0 < \lambda <
1$) if $\|P x\| \le \lambda \cdot \| x \|$ for all vectors
$ x \in \C^n$ s.t. $\sum_{i=1}^n  x_i = 0$. 

We will use $e_i \in \C^d$ to denote the standard basis vector with $1$ in
$i^{\text{th}}$ position and $0$ everywhere else. 

\subsection{Linear Algebra}
\noindent {\bf Matrices and Norms.} For matrix $A$, we use $A^\top$ to denote the transpose of $A$, we use $\ov{A}$ to denote the entry-wise complex conjugate of $A$. For square matrix $A\in \C^{n\times n}$, we use $A^*$ to denote the conjugate transpose of matrix $A$. It is obvious that $A^* = \ov{A}^\top = \ov{A^\top}$. 
We say a complex square matrix $A$ is { Hermitian}, if $A = A^*$, unitary if $AA^*=A^*A=I$, and { positive-semidefinite} (psd) if $A=A^*$ and
$x^* A x \geq 0$ for all $x\in \C^n$. We use $\succeq, \preceq$ to denote the semidefinite ordering, e.g. $A \succeq 0$ means that $A$ is psd.

For $p\in [1,\infty)$, define the {Schatten} $p$-norm of $A$ as
\begin{align*}
\| A \|_p = \left( \sum_{n\geq 1} s_n^p(A) \right)^{1/p}
\end{align*}
for $s_1(A) \geq s_2(A) \geq \cdots \geq s_n(A) \geq \cdots \geq 0$ the singular values of $A$, i.e., the eigenvalues of the Hermitian matrix $|A| = \sqrt{ (A^* A) }$. Then $\| A \|_p^p = \tr [ |T|^p ]$.

For matrix $A \in \C^{n\times n}$, we define $\| A \|$ to be the spectral norm of $A$, i.e.,
\begin{align*}
\|A\| = \max_{ \|  x \|_2 =1, x \in \C^n } x^* A x
\end{align*}

\noindent {\bf Tensor Products.} Given two vectors ${v} \in \C^{d_1}$ and ${w} \in \C^{d_2}$, their
tensor product ${v} \otimes {w} \in \C^{d_1 d_2}$ is the vector
whose $(i,j)^{\text{th}}$ entry is ${v}(i) {w}(j)$ (for concreteness, assume the
entries are in lexicographic order). 
Given two matrices $A_1 \in \C^{d_1 \times d_1}$ and $A_2 \in \C^{d_2 \times d_2}$, their tensor
product $A_1 \otimes A_2 \in \C^{d_1 d_2 \times d_1 d_2}$ is the matrix whose $((i,k),
(j,l))^{\text{th}}$ entry is $A_1(i,j) A_2(k,l)$. It is easy to see that
$$(A\otimes B)({v}\otimes{w}) = A{v}\otimes B{w}$$
and
$$(A\otimes B)(C\otimes D)=AB\otimes CD.$$

For a matrix $X \in \C^{d\times d}$, $\vect(X) \in \C^{d^2}$ will denote the vectorized version of the matrix $X$. That is 
$$
\vect(X) = \sum_{i,j=1}^d X({i,j}) e_i \otimes e_j
$$
We have the following relationship between matrix multiplication and the tensor
product:
$$\vect(AXB) = (A\otimes B^\top )\vect(X).$$

\noindent {\bf Exponential and Logarithm.} All logarithms will be taken with the base $e$, and $\expon(x)$ will denote $e^x$.
The matrix exponential of a complex matrix $A \in \C^{d \times d}$ is defined by the
Taylor expansion: $$ \expon(A) = \sum_{j=0}^{\infty} \frac{A^j}{j!}, $$
which converges for all matrices $A$. 
We will use the fact that
$$\expon(A)\otimes\expon(B) = \expon(A\otimes I + I\otimes B),$$
which may be checked by expanding both sides and comparing terms.

The matrix logarithm of a positive definite matrix $A=UDU^*$ with $D$ diagonal and positive is defined by
$$\log(A) :=U\log(D)U^*,$$
where the logarithm of $D$ is taken entrywise. For such matrices we have
$$\log(\exp(A))=\exp(\log(A))=A.$$
For positive definite $A$ and complex $z$, we define $A^z:=\exp(z\log(A))$.

\noindent {\bf Polar Decomposition.} The polar decomposition of a square complex matrix $A$ is a matrix decomposition of the form 
\begin{align*}
A = U V
\end{align*}
where $U$ is a unitary matrix and $V$ is a psd matrix.
The polar decomposition separates matrix $A$ into a component that stretches the space along a set of orthogonal axes, represented by $V$, and a rotation (with possible reflection) represented by $U$. The decomposition of the complex conjugate of matrix $A$ can written as 
\begin{align*}
\ov{A} = \ov{U} \ov{V}
\end{align*} 
The decomposition of the conjugate transpose of matrix $A$ can be written as 
\begin{align*}
A^* = V^* U^*.
\end{align*}

The following simple proposition will be useful in our proofs. 
\begin{proposition}\label{prop:trace_norm} Let $A$ and $B$ be Hermitian psd matrices. Then
$$
\tr[AB] \le \|A\| \cdot \tr[B]
$$
\end{proposition}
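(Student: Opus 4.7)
The plan is to use the fact that the operator norm of a Hermitian matrix $A$ gives the bound $A \preceq \|A\| \cdot I$ in the semidefinite order, and then to insert a square root of $B$ on either side of $A$ so that the trace can be evaluated by the cyclic property.

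More concretely, I would first note that since $A$ is Hermitian, $\|A\| = \lambda_{\max}(A)$ (as $A$ is psd we even have $\|A\| = \lambda_{\max}(A) \ge 0$, though this is not needed), and hence $\|A\| \cdot I - A \succeq 0$. Next, since $B$ is psd it has a (unique) psd square root $B^{1/2}$, and conjugating a psd matrix by any matrix preserves the psd property, so
\begin{equation*}
B^{1/2}\bigl(\|A\| \cdot I - A\bigr)B^{1/2} = \|A\| \cdot B - B^{1/2} A B^{1/2} \succeq 0.
\end{equation*}
Taking traces (which is monotone with respect to $\succeq$ on Hermitian matrices, since the trace equals the sum of eigenvalues) and using the cyclic property $\tr[B^{1/2} A B^{1/2}] = \tr[A B^{1/2} B^{1/2}] = \tr[AB]$ then yields the claim.

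There is no real obstacle here; the only thing to be slightly careful about is that cyclicity of trace is being applied legitimately (it holds for any square matrices, so this is fine), and that $B^{1/2}$ exists as a Hermitian psd matrix (which follows from the spectral theorem applied to the psd matrix $B$). An alternative proof via the spectral decomposition $B = \sum_i \lambda_i v_i v_i^*$ with $\lambda_i \ge 0$ gives $\tr[AB] = \sum_i \lambda_i \, v_i^* A v_i \le \sum_i \lambda_i \|A\| = \|A\| \tr[B]$, which is equally short; I would include whichever matches the style of the surrounding exposition best.
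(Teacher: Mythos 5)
Your proof is correct. Your primary argument takes a genuinely different route from the paper: you conjugate the operator inequality $\|A\|\cdot I - A \succeq 0$ by $B^{1/2}$ and then use cyclicity and monotonicity of the trace, whereas the paper works directly from the spectral decomposition $B = \sum_j \sigma_j v_j v_j^*$ and bounds each quadratic form $v_j^* A v_j$ by $\|A\|$. The two are equally short; the paper's version (which you in fact write out verbatim as your "alternative proof" in the last sentence) is slightly more elementary in that it needs only the variational characterization of $\|A\|$ for Hermitian $A$ and not the existence of a psd square root or the fact that $X \succeq 0 \Rightarrow \tr[X] \ge 0$. Your conjugation argument, on the other hand, is the one that generalizes most cleanly to operator monotone settings (e.g., to inequalities of the form $\tr[f(A)B]$), so it is a reasonable choice if one anticipates reuse. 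Either would serve here.
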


\begin{proof} Let $B = \sum_{j=1}^n \sigma_j v_j v_j^{\dagger}$ be the eigenvalue decomposition of $B$. Then
$$
\tr[AB] =  \sum_{j=1}^n \sigma_j \tr\left[ A v_j v_j^{\dagger} \right] = \sum_{j=1}^n \sigma_j  v_j^{\dagger}  A v_j \le  \sum_{j=1}^n \sigma_j \cdot \|A\| = \|A\| \cdot \tr[B]
$$
\end{proof}

\subsection{Complex analysis}
A function $f:U\rightarrow\C$ on a domain $U\subseteq \C$ is {\em
holomorphic} if it has a complex derivative in a neighborhood of every point
$z\in U$. The existence of a complex derivative in a neighborhood is a very
strong condition, for it implies that any holomorphic function is actually
infinitely differentiable and equal to its own Taylor series (i.e., {\em
analytic}) at every point in $U$. A {\em biholomorphic} function is a
bijective holomorphic function whose inverse is also holomorphic. It follows
from the definition that sums, products, and compositions of holomorphic functions
are holomorphic. We will also talk about matrix-valued holomorphic functions $f:U\rightarrow\C^{d\times d}$, which 
just means that every entry is holomorphic.

The main property that we will use is that the value of a holomorphic function
at a point $z\in U$ can be related to values that it takes on the boundary of
$U$, in the following way. A function $f:U\rightarrow\R\cup\{-\infty\}$ is called {\em subharmonic} if it is 
upper semicontinuous and 
$$ f(z)\le \frac{1}{2\pi}\int_0^{2\pi}f(z+re^{i\theta})d\theta$$
for all $z\in U$ and $r>0$ such that the closed disk $D(z,r)$ is contained in $U$, and all of the
above integrals converge. 

We will make frequent use of the following standard fact.
\begin{proposition} If $f$ is analytic on a domain $U\subset \C$ then $\log|f(z)|$ is subharmonic on $U$.
\end{proposition}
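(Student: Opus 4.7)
The plan is to verify the two conditions in the definition of subharmonicity directly, handling the zeros of $f$ with some care. Throughout I will assume $f \not\equiv 0$ (otherwise $\log|f|\equiv -\infty$ is trivially subharmonic); since $U$ is a domain, this implies the zeros of $f$ form a discrete set.

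First I would check upper semicontinuity of $g(z) := \log|f(z)|$. Since $f$ is holomorphic it is continuous, hence $|f|$ is continuous, and $\log$ is continuous on $(0,\infty)$ with $\log t\to -\infty$ as $t\downarrow 0$. Therefore $g$ is continuous at any $z$ with $f(z)\neq 0$, and at a zero $z_0$ of $f$ we have $g(z_0)=-\infty$ and $\limsup_{w\to z_0} g(w) = -\infty = g(z_0)$ by continuity of $|f|$. This gives upper semicontinuity. A separate (standard) observation, used only to ensure the integrals converge, is that $\log|f|$ is locally integrable on $U$ provided $f\not\equiv 0$, because the zeros are isolated and near a zero $a$ of multiplicity $m$ one has $\log|f(w)| = m\log|w-a|+O(1)$, which is integrable in the plane.

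Next I would prove the sub-mean value inequality
$$ g(z)\le \frac{1}{2\pi}\int_0^{2\pi} g(z+re^{i\theta})\,d\theta$$
for every closed disk $\overline{D(z,r)}\subset U$. If $f(z)=0$ the left-hand side is $-\infty$ and there is nothing to check. Otherwise, there are two subcases. \textbf{(i)} If $f$ has no zeros in $\overline{D(z,r)}$, the disk is simply connected and $f$ is non-vanishing there, so there is a holomorphic branch of $\log f$ on a neighborhood of the disk. Then $g=\mathrm{Re}(\log f)$ is harmonic on that neighborhood, and the mean value property for harmonic functions gives equality. \textbf{(ii)} If $f$ has zeros $a_1,\ldots,a_n\in D(z,r)$ (counted with multiplicity), I would invoke Jensen's formula (after translating so that $z$ becomes the origin), which states
$$ \log|f(z)| = \frac{1}{2\pi}\int_0^{2\pi}\log|f(z+re^{i\theta})|\,d\theta - \sum_{k=1}^n \log\frac{r}{|a_k-z|}.$$
Since each $|a_k-z|<r$ the correction sum is non-negative, so the desired inequality follows immediately. (Jensen itself can be derived on the fly by writing $f(w)=g(w)\prod_k(w-a_k)$ with $g$ non-vanishing and harmonic-log, applying case (i) to $g$, and using the explicit computation $\frac{1}{2\pi}\int_0^{2\pi}\log|re^{i\theta}-a|\,d\theta=\log r$ for $|a|<r$.)

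The one genuine obstacle is the edge case that $f$ has a zero on the boundary circle $\partial D(z,r)$, in which case the factorization argument requires $g$ to extend holomorphically to a neighborhood of the closed disk. I would handle this by monotone approximation: pick a sequence $r_j\uparrow r$ such that $f$ has no zeros on $\partial D(z,r_j)$ (possible since zeros are discrete), apply case (ii) at each $r_j$, and pass to the limit using the local integrability of $\log|f|$ together with the monotone/dominated convergence theorem (the boundary values vary continuously away from the finitely many boundary zeros, and near a boundary zero the logarithm remains integrable). This turns the whole argument into a clean verification of the two defining properties of subharmonicity.
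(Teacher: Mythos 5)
The paper states this proposition as a ``standard fact'' and offers no proof of its own, so there is no in-paper argument to compare against. Your proof is the standard one and is correct: you establish upper semicontinuity directly, prove the sub-mean-value inequality by reducing to harmonicity of $\mathrm{Re}\,\log f$ on zero-free disks, invoke Jensen's formula (whose correction term has the right sign) when interior zeros are present, and handle the measure-zero set of radii with boundary zeros by approximating from below. One small remark on the last step: the convergence of the circular averages $A(r_j)\to A(r)$ is not literally a monotone-convergence argument, since $g(z+r_j e^{i\theta})$ need not be monotone in $j$ pointwise; what actually closes the argument is either reverse Fatou (using that $g$ is bounded above on a compact neighborhood together with its upper semicontinuity, giving $\limsup_j A(r_j)\le A(r)$) or dominated convergence with a uniform-in-$j$ integrable lower bound of the form $m\log|\theta-\theta_0|+O(1)$ near each boundary zero, both of which you essentially gesture at. With that clarification the proof is complete and self-contained.
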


Our main tool will be the Poisson Integral Formula for subharmonic functions.
\begin{lemma}[Poisson integral formula on unit disk {\cite[Eq 1.3.35]{g08}}] \label{lem:poisson_int} For any subharmonic function $U$ defined on the unit disk $\{ z \in \mathbb{C} : |z| \leq 1 \}$, we have that 
\begin{align*}
U(z) \leq \frac{1}{2\pi} \int_{-\pi}^{\pi} U(e^{\i \varphi}) \frac{1-|z|^2}{ | e^{\i \varphi} - z|^2 } \mathrm{d} \varphi, ~\quad~ \forall ~ | z | < 1. 
\end{align*}
\end{lemma}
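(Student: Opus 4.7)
The plan is to reduce the subharmonic inequality to the classical Poisson integral formula for harmonic functions via the maximum principle. First, I would recall that for fixed $\varphi$ the Poisson kernel
$$P_z(\varphi) := \frac{1 - |z|^2}{|e^{\i\varphi} - z|^2}$$
is a nonnegative harmonic function of $z$ on the open unit disk $\mathbb{D}$, and that for any continuous function $f$ on the unit circle, the Poisson integral
$$ h_f(z) := \frac{1}{2\pi}\int_{-\pi}^{\pi} P_z(\varphi) f(e^{\i\varphi})\, \mathrm{d}\varphi $$
is the unique harmonic function on $\mathbb{D}$ that is continuous on $\overline{\mathbb{D}}$ and equals $f$ on the boundary. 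This is the solution of the Dirichlet problem on the disk and may be derived by expanding the kernel in a Fourier series and matching coefficients with those of $f$.

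Second, I would establish the maximum principle for subharmonic functions: if $V$ is subharmonic on $\mathbb{D}$ with $\limsup_{z\to \zeta} V(z) \le 0$ for every $\zeta \in \partial\mathbb{D}$, then $V \le 0$ on $\mathbb{D}$. The proof is the standard one: if $V$ attained a positive value somewhere, then since $V$ is upper semicontinuous it would attain its supremum $M>0$ at some interior $z_0$; the sub-mean-value inequality
$$ V(z_0) \le \frac{1}{2\pi}\int_0^{2\pi} V(z_0 + re^{\i\theta})\, \mathrm{d}\theta $$
would then force $V \equiv M$ on every small circle around $z_0$, and an open–closed connectedness argument would propagate this to all of $\mathbb{D}$, contradicting the boundary assumption.

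Third, assuming momentarily that $U$ extends continuously to $\overline{\mathbb{D}}$, I would apply the maximum principle to $V := U - h_{U|_{\partial\mathbb{D}}}$, which is subharmonic (as a sum of a subharmonic function and a harmonic one) and has boundary values $0$. The conclusion $V\le 0$ rearranges to exactly the claimed inequality.

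The main obstacle is that $U$ is assumed only upper semicontinuous, not continuous, so the boundary trace need not be continuous and the Poisson integral above may not be classically well defined. To handle this, I would approximate $U|_{\partial\mathbb{D}}$ from above by a decreasing sequence of continuous functions $f_n \searrow U|_{\partial\mathbb{D}}$ (which exists because upper semicontinuous functions on a compact metric space are pointwise infima of continuous ones), apply the continuous case to each $f_n$ to obtain
$$ U(z) \le \frac{1}{2\pi}\int_{-\pi}^{\pi} P_z(\varphi) f_n(e^{\i\varphi})\, \mathrm{d}\varphi, $$
and then invoke the monotone (or dominated) convergence theorem to take $n\to\infty$, using nonnegativity of $P_z$ and the fact that $f_1$ provides an integrable majorant. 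This yields the inequality in the stated form and completes the proof.
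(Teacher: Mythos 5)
The paper does not prove this lemma; it simply cites Grafakos~\cite[Eq.~1.3.35]{g08} as a standard reference. Your argument is the standard textbook derivation, and it is correct: the Poisson integral solves the Dirichlet problem with continuous boundary data, the maximum principle for subharmonic functions then yields $U \le h_f$ when the boundary trace is continuous, and the general upper semicontinuous case follows by approximating the boundary trace from above by continuous functions and passing to the limit via monotone convergence (with the nonnegativity of the Poisson kernel making the approximation monotone on the Poisson-integral side as well). One small presentational nit: in the final step you say you would ``apply the continuous case to each $f_n$,'' but $U$ itself is never continuous there. What you actually need, and what your argument effectively uses, is the maximum principle applied to $V := U - h_{f_n}$, noting that $\limsup_{z\to\zeta} U(z) \le U(\zeta) \le f_n(\zeta) = \lim_{z\to\zeta} h_{f_n}(z)$ by upper semicontinuity of $U$ and continuity of $h_{f_n}$ up to the boundary. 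This is exactly the reasoning in your third step, so it is a labeling issue rather than a gap.
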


\section{New Golden-Thompson inequality}\label{sec:gt}

We begin by giving an outline of the proof of Theorem \ref{thm:our_gt}. The
proof of Theorem \ref{thm:generalGT} in \cite{Sutter2017} relies on the
multivariate Lie-Trotter product formula (e.g. see \cite{b97}), which states
that: 
\[
\exp \left( \sum_{j=1}^{k} L_{j} \right)=\lim_{\theta\rightarrow0^+}\left(\prod_{j=1}^{k}\exp(\theta L_{j})\right)^{\frac{1}{\theta}}.
\]
For Hermitian $L_j$, a judicious application of the above allows one to rewrite the trace of the exponential as a limit of
Schatten norms:
\begin{align*}
 \log \left( \tr\left[ \exp \left( \sum_{j=1}^k L_j \right) \right] \right) 
	=  & ~ \lim_{\theta \rightarrow 0^+} \frac{2}{\theta}  \log \left\|   \prod_{j=1}^k \exp \left( \frac{\theta}{2} L_j \right)\right\|_{2/\theta}.
\end{align*}
Thus, understanding the matrix exponential of a sum is the same as
understanding the behavior of a certain norm of the product as $\theta\rightarrow 0$.  The idea
of \cite{Sutter2017} is to use {\em complex interpolation}, along the lines of the Stein-Hirschman theorem
in complex analysis:  for every fixed
real $\theta$ near zero, find a complex function $F_\theta(z)$ that agrees with the
right hand side at $z=\theta$ and is holomorphic on the strip $\{0\le
\Re(z)\le 1\}$. Since the value of a holomorphic function at any point can be
related to an integral of its values on the boundary, this allows one to relate
$F_\theta(\theta)$ to its integrals on $\{\Re(z)=0\}$ and $\{\Re(z)=1\}$, which
are easy to understand. Taking the limit in $\theta$ yields Theorem
\ref{thm:generalGT}.

To avoid integration on the whole vertical line $\left\{ 1+\i b :
b\in\mathbb{R}\right\}$, we observe that the above strategy
only relies on the fact that $\theta$ is enclosed by the two vertical lines
$\left\{ \i b : b\in\mathbb{R}\right\} $ and $\left\{ 1 + \i b :
b\in\mathbb{R}\right\} $, and we could have used any other region enclosing a neighborhood
of real positive $\theta$ near zero, provided we can define the required holomorphic functions $F_\theta$.
We choose the half-circle (which is easy to work with because the Riemann map to the unit disk is
explicit) and use it to derive a variant of the Riesz\textendash Thorin theorem
(Theorem \ref{thm:better_integral}), from which our new multi-matrix Golden-Thompson inequality follows by mimicking the remainder of the proof of
\ref{thm:generalGT} given in \cite{Sutter2017}.
\subsection{Complex estimate on the half disk}

In general, we can upper bound the value of any subharmonic function on a simply connected domain by mapping the domain to the unit disk via Riemann mapping theorem and applying the Poisson integral formula. In this section, we will give such estimate on a unit half disk.
\begin{figure}[!t]
  \centering
    \includegraphics[width=1.0\textwidth]{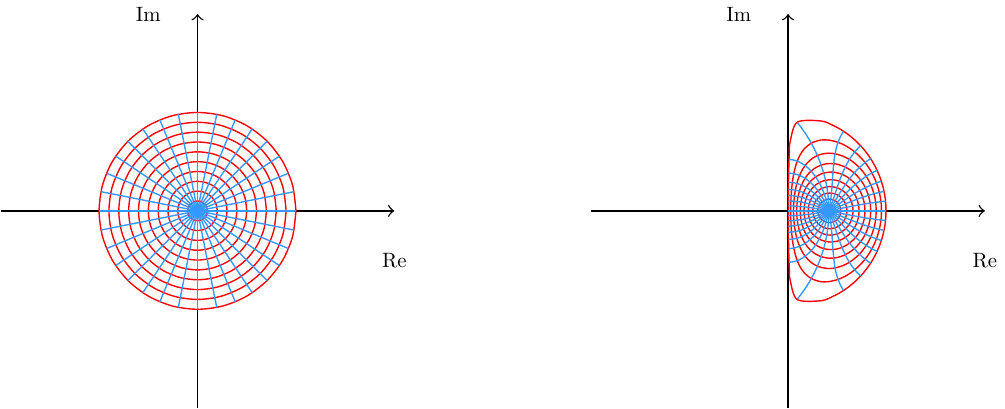}
    \caption{\label{fig:unit_to_half} The function $h(z) = - \frac{1+z}{1-z} + \sqrt{ \left(\frac{1+z}{1-z}\right)^2 + 1}  $ maps the unit disk $\{ z \in \C : |z| \leq 1 \}$ to the half disk $\{ z \in \C : |z|\leq 1 \text{~and~} \text{Re}(z) \geq 0 \}$.}
\end{figure}

The following lemma follows from the biholomorphic map from unit disk onto the half disk. defined in Figure \ref{fig:unit_to_half}.

\begin{lemma}\label{lem:gt_basic}[Poisson Integral Formula on the Half-Disk]
For any analytic function $F$ on the half disk $\{ z  \in \C  : |z| \leq 1 \text{~and~} \mathrm{Re}(z) \geq 0 \}$, we have that
\begin{align}
\log |F(x)| \leq \frac{1}{2\pi} \int_{-\pi}^{\pi} \log \big|F \circ h(e^{\i \varphi}) \big| \frac{1-\rho^2}{1-2\rho \cos ( \varphi) + \rho^2} \mathrm{d} \varphi \label{eq:poisson_half_disk}
\end{align}
for any $0 \leq x \leq 1$ where 
\begin{align*}
h(z) = - \frac{1+z}{1-z} + \sqrt{ \left(\frac{1+z}{1-z}\right)^2 + 1} \quad \mathrm{~and~} \quad \rho = \frac{x^2+2x-1}{x^2-2x -1}.
\end{align*}
\end{lemma}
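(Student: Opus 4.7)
The strategy is to pull the Poisson integral formula on the unit disk (Lemma \ref{lem:poisson_int}) back from the unit disk to the half disk via the biholomorphic map $h$ described in Figure \ref{fig:unit_to_half}. This is the standard conformal-invariance argument for subharmonic majorization: if $\Phi : \mathbb{D} \to \Omega$ is biholomorphic and $F$ is analytic on $\Omega$, then $\log |F \circ \Phi|$ is subharmonic on $\mathbb{D}$, so applying the unit-disk Poisson estimate and then changing variables back yields the corresponding estimate on $\Omega$.

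\textbf{Step 1: Verify that $h$ is a biholomorphism from the unit disk onto the half disk.} First I would decompose $h$ as $h = g_2 \circ g_1$, where $g_1(z) = \tfrac{1+z}{1-z}$ is the standard Cayley map sending the unit disk to the right half-plane, and $g_2(u) = -u + \sqrt{u^2 + 1}$ (taking the branch of the square root analytic on the right half-plane). A direct algebraic inversion shows $g_2^{-1}(w) = \tfrac{1-w^2}{2w}$ (from $(w+u)^2 = u^2+1$), which confirms that $g_2$ is a biholomorphism from the right half-plane to the half disk $\{|w|\le 1,\ \mathrm{Re}(w)\ge 0\}$; composing the inversions yields
\[
h^{-1}(w) \;=\; \frac{w^2 + 2w - 1}{w^2 - 2w - 1}.
\]
In particular, for $0 \leq x \leq 1$ we have $h^{-1}(x) = \rho \in [0,1)$, exactly the value appearing in the statement of the lemma, and this $\rho$ is real.

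\textbf{Step 2: Apply the unit-disk Poisson formula to $U := \log|F \circ h|$.} Since $F$ is analytic on the half disk and $h$ maps $\mathbb{D}$ biholomorphically onto the (open) half disk, $F \circ h$ is analytic on $\mathbb{D}$, so $U$ is subharmonic on $\mathbb{D}$ (extending to the boundary by upper semicontinuity). Lemma \ref{lem:poisson_int} then gives, for any $|z| < 1$,
\[
U(z) \;\leq\; \frac{1}{2\pi}\int_{-\pi}^{\pi} U(e^{\i \varphi})\, \frac{1-|z|^2}{|e^{\i\varphi} - z|^2}\,\mathrm{d}\varphi.
\]
Specializing to $z = \rho$ and using $h(\rho) = x$ yields $U(\rho) = \log|F(x)|$ on the left, and on the right the kernel becomes $\tfrac{1-\rho^2}{1 - 2\rho\cos\varphi + \rho^2}$ (since $|e^{\i\varphi} - \rho|^2 = 1 - 2\rho\cos\varphi + \rho^2$ when $\rho$ is real). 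This gives exactly \eqref{eq:poisson_half_disk}.

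\textbf{Anticipated difficulty.} The only non-routine step is the bookkeeping in Step 1: confirming that $h$ is a \emph{bijective} holomorphic map of the unit disk onto the half disk, in particular that the square-root branch can be chosen analytic on the whole right half-plane and that $g_2$ actually lands inside the unit disk with positive real part. Once this is nailed down, the rest of the argument is a mechanical application of the conformal-invariance of subharmonicity and the explicit inversion $x \mapsto \rho$. No additional estimates are needed; the lemma is essentially just the unit-disk Poisson formula transported through a specific Riemann map.
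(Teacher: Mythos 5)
Your proposal is correct and follows essentially the same approach as the paper: apply the unit-disk Poisson formula to the subharmonic function $\log|F\circ h|$ and substitute $z=\rho=h^{-1}(x)$, with the biholomorphy of $h$ verified separately (the paper does this in an appendix lemma via the explicit inverse $h^{-1}(w) = \frac{w^2+2w-1}{w^2-2w-1}$, matching what you derived). One small slip: for $x\in[0,1]$ you actually have $\rho\in[-1,1]$ rather than $[0,1)$ (e.g. $\rho<0$ once $x>\sqrt{2}-1$), but since $\rho$ is always real the kernel identity $|e^{\i\varphi}-\rho|^2 = 1-2\rho\cos\varphi+\rho^2$ and the conclusion still hold.
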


\begin{proof}
Note that the function $h(z)$ is a biholomorphic map from the unit disk $\{ z \in \C : |z| \leq 1 \}$ to the half disk $\{ z \in \C : |z|\leq 1 \text{~and~} \text{Re}(z) \geq 0 \} $. (We provide a proof for completeness, see Lemma~\ref{lem:h_map} \footnote{The similar version is an exercise 4 in page 163 (Section VII) of \cite{c78}, and also can be found here, \url{https://math.stackexchange.com/questions/882147/find-a-conformal-map-from-semi-disk-onto-unit-disk}})

Since $F$ and $h$ are holomorphic, $\log |F \circ h (z)|$ is subharmonic. Therefore, we can apply the Poisson integral formula for subharmonic functions (Lemma \ref{lem:poisson_int}) and get
\begin{align*}
\log |F \circ h(z)| \leq & ~ \frac{1}{2\pi } \int_{-\pi}^{\pi} \log \big|F \circ h (e^{\i \varphi})\big| \frac{1-|z|^2}{ | e^{\i \varphi} - z |^2 } \mathrm{d} \varphi \\ 
= & ~ \frac{1}{2\pi} \int_{-\pi}^{\pi} \log \big|F \circ h (e^{\i \varphi})\big| \frac{ 1- \rho^2}{ 1 - 2\rho \cos (\theta-\varphi) + \rho^2}  \mathrm{d} \varphi
\end{align*}
for $z = \rho e^{\i \theta}$.

Setting $x=h(z)$, we obtain that
\begin{align*}
z = \frac{x^2 +2 x - 1}{ x^2 - 2 x -1}.
\end{align*}
Therefore, we have that
\begin{align*}
\theta = 0 \text{~and~} \rho = \frac{x^2+2x -1}{ x^2 -2x - 1}.
\end{align*}
This gives the desired result.

\end{proof}

The following lemma allows us to conveniently study the behavior of certain
analytic functions near zero. The idea is that when $|F(z)|$ is at most $1$ on
the imaginary axis, $\log | F(\theta) |$ should be close to $0$ for small $\theta$,
 and the value of $\log | F(\theta) |/\theta$ can be upper
bounded by a suitable average of the values on the boundary of the half disk.

\begin{lemma} \label{lem:half_disk_estimate}
Given any analytic function $F$ on the half disk $\{ z  \in \C  : |z| \leq 1 \text{~and~} \mathrm{Re}(z) \geq 0 \}$. Suppose that $|F(\i y)| \leq 1$ for all $y \in [-1,1]$. Then, for any $0\leq \theta \leq 1/4$, we have
\begin{align*}
\log | F(\theta) | \leq \left(\frac{4\theta}{\pi} + O(\theta^2) \right) \int_{-\pi/2}^{\pi/2}  \log \big|F (e^{\i \phi}) \big|  \mathrm{d} \mu_\theta (\phi)
\end{align*}
where $\mu_\theta$ is some probability distribution on $[-\frac{\pi}{2},\frac{\pi}{2}]$ depending only on $\theta$, and $\mu_\theta \rightarrow$ some probability distribution as $\theta \rightarrow 0^+$.
\end{lemma}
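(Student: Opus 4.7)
The plan is to apply Lemma~\ref{lem:gt_basic} directly at $x = \theta$, giving
\begin{align*}
\log |F(\theta)| \leq \frac{1}{2\pi} \int_{-\pi}^{\pi} \log \big| F \circ h(e^{\i\varphi}) \big| \, \frac{1-\rho^2}{1 - 2\rho \cos\varphi + \rho^2} \, \mathrm{d}\varphi,
\end{align*}
where $\rho = \rho(\theta) = (\theta^2 + 2\theta - 1)/(\theta^2 - 2\theta - 1)$. The next step is to understand how the boundary of the half-disk decomposes under the biholomorphic map $h$. A quick check of the defining formula of $h$ shows $h(1) = 0$, $h(-1) = 1$, $h(\i) = -\i$, $h(-\i) = \i$. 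Thus, as $\varphi$ varies over $(-\pi/2,\pi/2)$ (the right half of the unit circle) the image $h(e^{\i\varphi})$ traces the vertical segment $\{\i y : y \in [-1,1]\}$, while as $\varphi$ varies over $(\pi/2,\pi) \cup (-\pi,-\pi/2)$ (the left half) the image traces the half-circle $\{e^{\i\phi} : \phi \in [-\pi/2,\pi/2]\}$.

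On the vertical-segment portion, the hypothesis $|F(\i y)| \leq 1$ renders the integrand nonpositive, so I drop that part to obtain an upper bound. On the remaining portion, I change variables $\varphi \mapsto \phi$ via $h(e^{\i\varphi}) = e^{\i\phi}$; the two arcs of the left half are exchanged by conjugation (since $h$ has real coefficients), so they map symmetrically onto $[0,\pi/2]$ and $[-\pi/2,0]$. Pushing forward the density $\frac{1}{2\pi}\,\frac{1-\rho^2}{1 - 2\rho\cos\varphi + \rho^2}$ under this substitution yields a nonnegative weight $w_\theta(\phi)$ on $[-\pi/2,\pi/2]$. Setting $c_\theta := \int_{-\pi/2}^{\pi/2} w_\theta(\phi)\,\mathrm{d}\phi$ and $\mu_\theta := w_\theta/c_\theta$ gives a probability measure such that
\begin{align*}
\log|F(\theta)| \leq c_\theta \int_{-\pi/2}^{\pi/2} \log \big|F(e^{\i\phi})\big|\,\mathrm{d}\mu_\theta(\phi).
\end{align*}

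The main technical step is then to verify $c_\theta = \frac{4\theta}{\pi} + O(\theta^2)$ as $\theta \to 0^+$. It is cleanest to compute $c_\theta$ directly in the $\varphi$ variable as $\frac{1}{2\pi}\int_{\pi/2 \leq |\varphi| \leq \pi} \frac{1-\rho^2}{1 - 2\rho\cos\varphi + \rho^2}\,\mathrm{d}\varphi$. Expanding $\rho(\theta) = 1 - 4\theta + O(\theta^2)$ gives $1 - \rho^2 = 8\theta + O(\theta^2)$, and on the range $\pi/2 \leq |\varphi| \leq \pi$ the denominator stays bounded away from zero, so it converges uniformly to $2(1 - \cos\varphi) = 4\sin^2(\varphi/2)$. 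The substitution $u = \varphi/2$ together with $\int_{\pi/4}^{\pi/2} \csc^2 u\,\mathrm{d}u = 1$ produces exactly the leading constant $\frac{8\theta}{2\pi} = \frac{4\theta}{\pi}$. Convergence of $\mu_\theta$ to a limiting distribution as $\theta \to 0^+$ then follows because the densities $w_\theta/c_\theta$ converge pointwise to a fixed positive density on $[-\pi/2,\pi/2]$. The main obstacle is ensuring that the error terms from the Poisson-kernel asymptotics are genuinely $O(\theta^2)$ after integration; this relies on having the singularity of the Poisson kernel (at $\varphi = 0$) excluded from the integration region, which is precisely what the half-disk geometry buys us.
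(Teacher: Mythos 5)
Your proposal is correct and follows the same route as the paper's proof: apply the Poisson integral formula on the half disk (Lemma~\ref{lem:gt_basic}), discard the nonpositive contribution from the vertical segment (the image of $|\varphi|\le\pi/2$ under $h$), and extract the prefactor $\frac{4\theta}{\pi}+O(\theta^2)$ by Taylor-expanding the Poisson kernel in $1-\rho$ on the arc $\pi/2\le|\varphi|\le\pi$, where the denominator is uniformly bounded below (this is exactly the content of the paper's Lemma~\ref{lem:theta_square}, and what your observation about excluding the singularity at $\varphi=0$ amounts to). Your formulation of defining $\mu_\theta$ as the exactly normalized pushforward $w_\theta/c_\theta$ and then analyzing $c_\theta$ in the $\varphi$ variable is slightly more explicit than the paper's informal normalization, and your identity $\int_{\pi/4}^{\pi/2}\csc^2 u\,\mathrm{d}u=1$ is equivalent to the paper's $\int_{\pi/2\le|\varphi|\le\pi}(1-\cos\varphi)^{-1}\,\mathrm{d}\varphi=2$.
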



\begin{proof}
Since $\log |F(\i y)| \leq 0$ for all $y \in [-1,1]$ and since $h(e^{\i \varphi})$ is imaginary with modulus  at most $1$ whenever $|\varphi| \leq \pi /2 $, we can ignore these $\varphi$ in the integral (\ref{eq:poisson_half_disk}), namely,
\begin{align}
\log |F(x)| & \leq \frac{1}{2\pi} \int_{-\pi}^{\pi} \log \big|F \circ h(e^{\i \varphi}) \big| \frac{1-\rho^2}{1-2\rho \cos ( \varphi) + \rho^2} \mathrm{d} \varphi \nonumber \\
& \leq \frac{1}{2\pi} \int_{\pi/2 \leq|\varphi| \leq \pi} \log \big|F \circ h(e^{\i \varphi}) \big| \frac{1-\rho^2}{1-2\rho \cos ( \varphi) + \rho^2} \mathrm{d} \varphi. \label{eq:poisson_half_disk2}
\end{align}

To bound the right hand side, for $\pi/2 \leq |\varphi| \leq \pi$ and $0 \leq \rho \leq 1$, we can prove the following statement using elementary calculations (see Lemma~\ref{lem:theta_square}),
\begin{align*}
\frac{1-\rho^2}{1-2\rho \cos ( \varphi) + \rho^2} 
 = \frac{1-\rho}{1 - \cos ( \varphi)} \pm O(1-\rho)^2.
\end{align*}
Note that $\rho = \frac{\theta^2 + 2\theta-1}{\theta^2-2\theta-1} \geq 1-4\theta$ for all $0 \leq \theta \leq 1/4$. Therefore, we have that $0 \leq \rho \leq 1$ and 
$$ \frac{1-\rho^2}{1-2\rho \cos ( \varphi) + \rho^2} \le \frac{4 \theta}{1 - \cos ( \varphi)} + O(\theta^2)$$

Putting this inequality into (\ref{eq:poisson_half_disk2}), we have that
\begin{align*}
\log |F(x)| & \leq \frac{1}{2 \pi} \int_{\pi/2 \leq|\varphi| \leq \pi} \left( \frac{4 \theta}{1 - \cos ( \varphi)} + O(\theta^2) \right) \log |F \circ h(e^{\i \varphi}) | \mathrm{d} \varphi  .
\end{align*}

We now observe that
\begin{align*}
\int_{\pi/2 \leq|\varphi| \leq \pi} \frac{1}{1 - \cos ( \varphi)} \mathrm{d} \varphi = \int_{-\pi}^{-\pi/2} \frac{1}{1 - \cos ( \varphi)} \mathrm{d} \varphi + \int_{\pi/2}^{\pi} \frac{1}{1 - \cos ( \varphi)} \mathrm{d} \varphi  =  2.
\end{align*}
Note that $h$ maps $e^{\i \varphi}$ for $\pi/2 \le |\varphi| \le \pi$ to the boundary of the half disk ($[-\pi/2,\pi/2]$), and let $\mu_\theta$ be some probability distribution on $[-\frac{\pi}{2},\frac{\pi}{2}]$ depending only on $\theta$. Then we can have
\begin{align*}
\log | F(\theta) | \leq \left(\frac{4\theta}{\pi} + O(\theta^2) \right) \int_{-\pi/2}^{\pi/2}  \log \big|F (e^{\i \phi}) \big|  \mathrm{d} \mu_\theta (\phi).
\end{align*}
Note that $\mu_{\theta} \rightarrow$ some probability distribution as $\theta \rightarrow 0^+$.
\end{proof}

\subsection{Bounded Multimatrix Golden-Thompson type inequality}
Plugging our new complex estimate into the proof of Theorem 3.1 in \cite{Sutter2017}, we obtain the following Riesz-Thorin-type inequality. We give a complete proof for completeness.
\begin{theorem}[Riesz-Thorin-type inequality]\label{thm:better_integral}
Let $S=\{ z \in \C : |z|\le 1 \mathrm{~and~} \mathrm{Re}(z) \geq 0 \}$ and let $G$ be a holomorphic map from $S$ to square matrices. Let $p_0 \geq p_1\in [1,\infty]$, for $\theta \in (0,1)$, define $p_{\theta}$ by
\begin{align*}
\frac{1}{p_{\theta}} = \frac{1-\theta}{p_0} + \frac{\theta}{p_1}.
\end{align*} 
If $z \rightarrow \| G(z) \|_{p_{\mathrm{Re}(z)}}$ is uniformly bounded on $S$ and $\| G(\i t) \|_{p_0} \leq 1$, then for any $0 \leq \theta \leq 1/4$,
\begin{align*}
\log \| G(\theta) \|_{p_\theta} \leq \left(\frac{4\theta}{\pi} + O(\theta^2) \right) \int_{-\pi/2}^{\pi/2} \log \| G(e^{\i \phi}) \|_{p_1} \mathrm{d} \mu_\theta (\phi)
\end{align*}
where $\mu_\theta$ is some probability distribution on $[-\frac{\pi}{2},\frac{\pi}{2}]$ depending only on $\theta$.
\end{theorem}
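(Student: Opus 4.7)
The plan is to follow the complex-interpolation template behind Theorem 3.1 of \cite{Sutter2017}, replacing Hadamard's three-lines lemma on a strip by our Lemma~\ref{lem:half_disk_estimate} on the half-disk $S$. First, by Schatten duality I would fix a matrix $X_0$ with $\|X_0\|_{q_\theta}=1$ (where $q_\theta$ is the H\"older-conjugate exponent of $p_\theta$) such that $\tr(G(\theta)X_0) = \|G(\theta)\|_{p_\theta}$, and take its polar decomposition $X_0 = U_0|X_0|$. Since $\sum_i \sigma_i^{q_\theta} = 1$ for the singular values $\sigma_i$ of $X_0$ and $q_\theta\ge 1$, all $\sigma_i\le 1$.

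Next I would build a scalar holomorphic function $\alpha:S\to\C$ whose real part is the harmonic extension of piecewise-constant boundary data: $\mathrm{Re}(\alpha) = q_\theta/q_0$ on the imaginary diameter and $\mathrm{Re}(\alpha) = c$ on the semicircular arc, with $c$ chosen to enforce $\alpha(\theta)=1$. The Poisson representation underlying Lemma~\ref{lem:half_disk_estimate} assigns harmonic mass $w(\theta) = \frac{4\theta}{\pi} + O(\theta^2)$ from $\theta$ to the arc, so $c$ is uniquely determined by $(1-w(\theta))\,q_\theta/q_0 + w(\theta)\,c = 1$. The hypothesis $p_0\ge p_1$ forces $q_0\le q_1$, and for $\theta\in(0,1/4]$ one has $w(\theta)\ge \theta$, which together with $1/q_\theta = (1-\theta)/q_0 + \theta/q_1$ gives $c\ge q_\theta/q_1$. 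Simple connectedness of $S$ furnishes a harmonic conjugate, making $\alpha$ holomorphic on $S$. I would then set $X(z) := U_0|X_0|^{\alpha(z)}$ and $F(z) := \tr(G(z) X(z))$, both holomorphic on $S$, with $F$ bounded by the uniform-boundedness hypothesis on $G$.

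For the boundary estimates, observe that $X(z)^*X(z) = |X_0|^{2\mathrm{Re}(\alpha(z))}$, so $\|X(z)\|_q = \big(\sum_i \sigma_i^{q\,\mathrm{Re}(\alpha(z))}\big)^{1/q}$. On the imaginary diameter, $q_0\cdot q_\theta/q_0 = q_\theta$ gives $\|X(iy)\|_{q_0}=1$, and Schatten H\"older yields $|F(iy)|\le \|G(iy)\|_{p_0}\le 1$. On the semicircular arc, $c\,q_1\ge q_\theta$ combined with $\sigma_i\le 1$ gives $\sigma_i^{c q_1}\le \sigma_i^{q_\theta}$, hence $\|X(e^{\i\phi})\|_{q_1}\le 1$ and $|F(e^{\i\phi})|\le \|G(e^{\i\phi})\|_{p_1}$. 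Applying Lemma~\ref{lem:half_disk_estimate} to $F$ then yields
\[
\log\|G(\theta)\|_{p_\theta} = \log|F(\theta)| \le \left(\frac{4\theta}{\pi} + O(\theta^2)\right)\int_{-\pi/2}^{\pi/2}\log\|G(e^{\i\phi})\|_{p_1}\,\mathrm{d}\mu_\theta(\phi),
\]
which is the desired inequality.

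The main obstacle will be the construction of $\alpha$ in the middle step: on a strip, the harmonic measure of the far boundary line from an interior point is exactly the interpolation parameter, so the linear ansatz $\alpha(z) = q_\theta((1-z)/q_0 + z/q_1)$ used in \cite{Sutter2017} directly produces the single exponent $p_1$ on the far boundary. On the half-disk the measure $w(\theta)$ is nonlinear in $\theta$ and the linear ansatz gives a $\phi$-dependent exponent on the arc rather than $p_1$; the piecewise-constant-boundary construction above circumvents this, with the inequality $w(\theta)\ge \theta$ (valid for $\theta\le 1/4$) being precisely what ensures $c\ge q_\theta/q_1$ and thus the clean $\|G(e^{\i\phi})\|_{p_1}$ bound on the right-hand side.
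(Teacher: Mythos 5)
Your proposal is correct, but it constructs the auxiliary function $F$ in a genuinely different way from the paper. The paper takes the polar decomposition $G(\theta)=UV$, sets $c=\|V\|_{p_\theta}$, and uses the \emph{linear} ansatz
$X(z)^* = (V/c)^{p_\theta\left(\frac{1-z}{q_0}+\frac{z}{q_1}\right)}U^*$,
which on the arc $z=e^{\i\phi}$ gives $\|X(e^{\i\phi})\|_{q_{\cos\phi}}=1$ and hence $|F(e^{\i\phi})|\le\|G(e^{\i\phi})\|_{p_{\cos\phi}}$; it then passes to $\|G(e^{\i\phi})\|_{p_1}$ by a one-line monotonicity step, since $p_1\le p_{\cos\phi}\le p_0$ and Schatten norms are nonincreasing in $p$. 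You instead construct a nonlinear holomorphic $\alpha(z)$ (harmonic extension of piecewise-constant boundary data) tuned so that $\alpha(\theta)=1$ exactly, which lands the exponent $p_1$ on the arc directly but requires the auxiliary inequality $w(\theta)\ge\theta$. You assert that inequality without proof; it is in fact true --- one can compute $w(\theta)=\frac{4}{\pi}\arctan\theta$, and $\arctan\theta\ge\frac{\pi}{4}\theta$ on $[0,1]$ by concavity of $\arctan$ --- but it is an additional lemma your route needs and the paper's does not. In short, the ``obstacle'' you flag at the end (that the linear ansatz produces a $\phi$-dependent exponent on the arc) is not actually an obstruction once you recall that Schatten norms are monotone in $p$ in exactly the helpful direction; your piecewise-constant construction is an alternative, somewhat more elaborate, way around it. Both arguments also share the standard technical caveat that one needs $V$ (resp.\ $|X_0|$) positive definite to define the matrix power $|X_0|^{\alpha(z)}$; the paper simply asserts invertibility of $G(\theta)$, and in general both proofs would appeal to a perturbation argument here.
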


\begin{proof}
To apply Lemma \ref{lem:half_disk_estimate}, we want to define a holomorphic function $F(z)$ such that 
$$|F(\i t)| \leq 1 ~ , ~ |F(z)| \leq \| G(z) \|_{p_1} \text{~and~} F(\theta ) = \| G(\theta) \|_{p_{\theta}}.$$

Now, we describe how to define such $F(z)$. For $x \in [0,1]$, define $q_x$ as the H\"{o}lder conjugate of $p_x$ such that $p_x^{-1} + q_x^{-1} = 1$. Hence, using the definition of $p_x$ in the statement, we have
\begin{align*}
\frac{1}{q_x}= \frac{1-x}{q_0} + \frac{x}{q_1}.
\end{align*}
	Now for our fixed $\theta \in (0,1)$, let $G(\theta) = U V$ be the polar decomposition of $G(\theta)$, where $V$ is positive definite since $G(\theta)$ is always invertible, and $U$ is unitary. Finally, we define $X(z)$ and $F(z)$ by
\begin{align*}
X(z)^* & = (V/c)^{ p_\theta ( \frac{1-z}{q_0} + \frac{z}{q_1} ) } U^*, \text{~where~} c = \| V \|_{p_\theta} = \| G(\theta) \|_{p_{\theta}} \\
F(z) & = \tr[  X(z)^* G(z) ].
\end{align*}

Note that $F(z)$ is holomorphic. Due to the renormalization $c$, we can show $\| X (x+\i y)) \|_{q_x}^{q_x} = 1$ for all $x \in [0,1]$:
\begin{align*}
\| X(x+\i y) \|_{q_x}^{q_x} = & ~ \tr \left[ \sqrt{ X^*(x+\i y) X(x+\i y) }^{q_x} \right] & \text{~by~definition~of~}\| \cdot \|_p \\
= & ~ \tr\left[ (V/c)^{q_x p_\theta ( \frac{1-x}{q_0} + \frac{x}{q_1} )} \right]  & \text{~by~} U^* U = I \\
= & ~ \tr\left[ (V/c)^{  p_\theta} \right]  & \text{~by~} \frac{1}{q_x}= \frac{1-x}{q_0} + \frac{x}{q_1} \\
= & ~ 1. & \text{~by~} c = \| V \|_{p_\theta}
\end{align*}
Therefore, $F(z)$ is bounded on $S$ as follows
\begin{align*}
|F(x+ \i y)| \leq \| X(x+\i y) \|_{q_x} \cdot \| G(x+\i y) \|_{p_x} \leq \| G(x+ \i y) \|_{p_x}
\end{align*}
Using this, it is obvious that 
\begin{align*}
|F(\i t)| \leq \| G(\i t) \|_{p_0} \leq 1, \text{~and~} |F(z)| \leq \| G(z) \|_{p_{\mathrm{Re}(z)}} \leq \| G(z) \|_{p_1}.
\end{align*}
for all $z \in S$ where we used that $p_0 \geq p_{\mathrm{Re}(z)} \geq p_1$.

Finally, we verify that $F(\theta ) = \| G(\theta) \|_{p_{\theta}}$:
\begin{align*}
F(\theta) = & ~ \tr[ X(\theta)^* G(\theta) ] \\
 = & ~ \tr[ (V/c)^{p_{\theta} ( \frac{1-\theta}{q_0} + \frac{\theta}{q_1} ) } U^*  \cdot U V ] & \text{~by~definition~of~}X \text{~and~} G\\
 = & ~ \tr[ (V/c)^{p_{\theta} ( \frac{1-\theta}{q_0} + \frac{\theta}{q_1} ) }  V ] & \text{~by~} U^* U = I \\
 = & ~ \tr[ c^{-p_{\theta}/q_{\theta}} V^{1+ p_{\theta}/q_{\theta}} ] & \text{~by~} \frac{1}{q_\theta}= \frac{1-\theta}{q_0} + \frac{\theta}{q_1}  \\
 = & ~ \tr[ c^{1-p_{\theta}} V^{p_{\theta}} ] & \text{~by~} p_{\theta}/q_{\theta} = p_{\theta} (1-1/p_{\theta}) = p_{\theta } - 1 \\
 = & ~ c^{1-p_{\theta} } c^{p_{\theta}} & \text{~by~} (\tr[V^{p_{\theta}}])^{1/p_{\theta}} = c \\
 = & ~ \| G(\theta) \|_{p_\theta} & ~ \text{~by~} c= \| G(\theta) \|_{p_{\theta}}.
\end{align*}

Hence, the statement follows from Lemma \ref{lem:half_disk_estimate}.
\end{proof}

Now, we are ready to prove our variant of multimatrix Golden-Thompson inequality. It follows from plugging in Theorem \ref{thm:better_integral} into the proof of Theorem 3.5 in \cite{Sutter2017}. We give a complete proof for completeness.

\begin{theorem}[Multimatrix Golden-Thompson inequality]\label{thm:better_gt}
For any $k$ Hermitian matrices $H_1, \cdots, H_k$, we have:
\begin{align*}
\log \left( \tr \left[ \exp \left( \sum_{j=1}^k H_j \right) \right] \right) \leq \frac{4}{\pi} \int_{-\frac{\pi}{2}}^{\frac{\pi}{2}} \log \left( \tr \left[ \prod_{j=1}^k \exp \left( \frac{e^{\i \phi}}{2} H_j \right) \prod_{j=k}^1 \exp \left( \frac{ e^{-\i \phi} }{2} H_j \right) \right] \right) \mathrm{d} \mu (\phi)
\end{align*}
where $\mu$ is some probability distribution on $[-\frac{\pi}{2},\frac{\pi}{2}]$.
\end{theorem}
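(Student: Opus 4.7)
The plan is to apply the Riesz--Thorin-type inequality (Theorem \ref{thm:better_integral}) to a carefully chosen holomorphic family of matrices built from the $H_j$, mirroring how Theorem 3.5 of \cite{Sutter2017} is derived from their Theorem 3.1, but now with our bounded integration region. Define the matrix-valued function
\begin{align*}
G(z) := \prod_{j=1}^k \exp\!\left(\frac{z}{2}H_j\right), \qquad z \in S = \{z\in\C : |z|\le 1, \ \mathrm{Re}(z)\ge 0\}.
\end{align*}
Each entry is an entire function of $z$, so $G$ is holomorphic, and $G$ is uniformly bounded on the compact set $S$ so the hypothesis of Theorem \ref{thm:better_integral} on $z\mapsto \|G(z)\|_{p_{\mathrm{Re}(z)}}$ holds. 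Since each $H_j$ is Hermitian, $\exp(\tfrac{\i t}{2}H_j)$ is unitary for real $t$, so $G(\i t)$ is a product of unitaries and hence $\|G(\i t)\|_\infty = 1$.

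Next, I will choose $p_0 = \infty$ and $p_1 = 2$, so that $\tfrac{1}{p_\theta} = \tfrac{\theta}{2}$, i.e.\ $p_\theta = 2/\theta$. Theorem \ref{thm:better_integral} then gives, for every $\theta \in (0,1/4]$,
\begin{align*}
\log \|G(\theta)\|_{2/\theta} \ \le\ \Bigl(\tfrac{4\theta}{\pi} + O(\theta^2)\Bigr) \int_{-\pi/2}^{\pi/2} \log \|G(e^{\i \phi})\|_2 \, \mathrm{d}\mu_\theta(\phi).
\end{align*}
Multiplying both sides by $2/\theta$ and letting $\theta\to 0^+$, the left side converges to $\log\tr\exp(\sum_j H_j)$ by the Lie--Trotter identity recalled in the outline: one verifies that $[G(\theta)^*G(\theta)]^{1/\theta} \to \exp(\sum_j H_j)$ since $G(\theta)^*G(\theta) = I + \theta \sum_j H_j + O(\theta^2)$, whence $\|G(\theta)\|_{2/\theta}^{2/\theta} = \tr[(G(\theta)^*G(\theta))^{1/\theta}] \to \tr\exp(\sum_j H_j)$, giving $\tfrac{2}{\theta}\log\|G(\theta)\|_{2/\theta} \to \log\tr\exp(\sum_j H_j)$. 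On the right side, the prefactor $\tfrac{2}{\theta}\cdot(\tfrac{4\theta}{\pi}+O(\theta^2))$ tends to $\tfrac{8}{\pi}$, and by Lemma \ref{lem:half_disk_estimate} the measures $\mu_\theta$ converge to some limiting probability measure $\mu$ on $[-\tfrac{\pi}{2}, \tfrac{\pi}{2}]$, so we obtain
\begin{align*}
\log\tr\exp\!\Bigl(\sum_{j=1}^k H_j\Bigr) \ \le\ \frac{8}{\pi}\int_{-\pi/2}^{\pi/2} \log \|G(e^{\i\phi})\|_2 \, \mathrm{d}\mu(\phi).
\end{align*}

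Finally, I will rewrite the integrand. Since $\exp(\tfrac{z}{2}H_j)^* = \exp(\tfrac{\bar z}{2}H_j)$ for Hermitian $H_j$, we have
\begin{align*}
\|G(e^{\i\phi})\|_2^2 = \tr\!\left[G(e^{\i\phi})^* G(e^{\i\phi})\right] = \tr\!\left[\prod_{j=1}^k \exp\!\Bigl(\frac{e^{\i\phi}}{2}H_j\Bigr) \prod_{j=k}^1 \exp\!\Bigl(\frac{e^{-\i\phi}}{2}H_j\Bigr)\right],
\end{align*}
where I used cyclicity of the trace to move one of the two products around. Taking $\tfrac12\log$ of both sides absorbs the factor of $2$ and turns $\tfrac{8}{\pi}$ into the desired $\tfrac{4}{\pi}$, yielding exactly the statement of Theorem \ref{thm:better_gt}.

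The main obstacle is purely bookkeeping rather than conceptual: one must carefully justify that the Lie--Trotter limit is compatible with the asymptotic expansion $\tfrac{4\theta}{\pi}+O(\theta^2)$ from Lemma \ref{lem:half_disk_estimate}, and that the weak limit $\mu_\theta\to\mu$ can be combined with $\log\|G(e^{\i\phi})\|_2$ (which is continuous and bounded in $\phi$ on the compact arc) to pass to the limit inside the integral. Both are standard, and the rest of the argument is a direct composition of the already-proved ingredients.
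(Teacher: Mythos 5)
Your proof is correct and follows essentially the same route as the paper: same choice of $G(z) = \prod_j \exp(\tfrac{z}{2}H_j)$, same $p_0 = \infty$, $p_1 = 2$ in Theorem~\ref{thm:better_integral}, same Lie--Trotter rewriting of $\log\tr\exp(\sum_j H_j)$ as $\lim_{\theta\to 0^+}\tfrac{2}{\theta}\log\|G(\theta)\|_{2/\theta}$, and the same substitution $\log\|G(e^{\i\phi})\|_2 = \tfrac12\log\tr[G(e^{\i\phi})G(e^{\i\phi})^*]$ to absorb the factor of $2$. The only cosmetic difference is that you pass to the limit $\theta\to 0^+$ before making this substitution (obtaining an intermediate $\tfrac{8}{\pi}$ bound on $\int\log\|G(e^{\i\phi})\|_2\,\mathrm{d}\mu$), whereas the paper substitutes inside the limit; both are valid.
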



\begin{proof}
Define
$$G(z) = \prod_{j=1}^k \exp \left( \frac{z}{2} H_j \right).$$
Note that $\| G(\i y)  \|_\infty = 1$ for all $y \in \R$. We now have:
\begin{align*}
	& ~ \log \left( \tr\left[ \exp \left( \sum_{j=1}^k H_j \right) \right] \right) \\
	= & ~ \log \left( \tr\left[ \exp \left( \sum_{j=1}^k H_j/2+\sum_{j=k}^1 H_j/2 \right) \right] \right) \\
	\\&\quad\textrm{by the Lie-Trotter formula}\\
	= & ~ \log \left( \tr\left[\lim_{\theta\rightarrow 0^+} (G(\theta)G(\theta)^*)^{1/\theta}\right]\right)\\
	&\quad\textrm{since the $H_j$ are Hermitian}\\
	= & ~ \lim_{\theta\rightarrow 0^+}\frac{2}{\theta}\log \left( \tr\left[(G(\theta)G(\theta)^*)^{1/\theta}\right]^{\theta/2}\right)\\
	\\&\quad\textrm{by continuity of $\log$ away from $0$}\\
=  & ~ \lim_{\theta \rightarrow 0^+} \frac{2}{\theta} \log \left\| G(\theta)  \right\|_{2/\theta} \\
\leq & ~ \lim_{\theta \rightarrow 0^+} 2 \left(\frac{4}{\pi} + O(\theta) \right) \int_{-\pi/2}^{\pi/2}  \log \| G(e^{\i \phi}) \|_2  \mathrm{d} \mu_\theta (\phi) \\
	\\&\quad\textrm{ by Theorem~\ref{thm:better_integral} with $p_0 = \infty$ and $p_1 = 2$}\\
= & ~ \lim_{\theta \rightarrow 0^+}  \left(\frac{4}{\pi} + O(\theta) \right) 
\int_{-\pi/2}^{\pi/2}  \log \left( \tr \left[ G(e^{\i \phi}) G(e^{\i \phi} )^* \right] \right) \mathrm{d} \mu_\theta (\phi),
\end{align*}

When $\theta \rightarrow 0^+$, $\mu_{\theta}(\phi)$ converges to some probability distribution $\mu$. This completes the proof.
\end{proof}

\begin{remark} 

We suspect the constant $4/\pi$ is tight and that any constant larger than one is unavoidable if we consider the maximum over a bounded domain inside the strip $\{ z \in \C : 0 \leq \text{Re}(z) \leq 1 \}$.
\end{remark}

\section{Proof of Theorem \ref{thm:main}}\label{sec:mainproof}

In this section we present the proof of Theorem \ref{thm:main}. We restate it here (with explicit constants) for convenience. 

\begin{theorem}\label{thm:main_restate} Let $G$ be a regular $\lambda$-expander on $V$ and let $f$ be a function $f : V \rightarrow \C^{d\times d}$ s.t. 
\begin{enumerate}
\item For each $v \in V$, $f(v)$ is Hermitian and $\|f(v)\| \le 1$. 
\item $\sum_{v \in V} f(v) = 0$. 
\end{enumerate}
If $v_1,\ldots,v_k$ is a stationary random walk on $G$, and $\epsilon\in (0,1)$,
\begin{align*}
&\Pr \left[ \lambda_{\textnormal{max}} \left( \sum_{j=1}^k f(v_j)\right) \ge k \eps\right] \le d^{2-\pi/4} \cdot \expon\left(-  \eps^2 (1 - \lambda) k/ 80 \right) \\
&\Pr \left[ \lambda_{\textnormal{min}} \left( \sum_{i=1}^k f(v_j)\right) \le -k \eps\right] \le d^{2-\pi/4} \cdot \expon\left(-  \eps^2 (1 - \lambda) k /80 \right)
\end{align*} 
\end{theorem}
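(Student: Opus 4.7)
My plan proceeds in four broad steps: (i) reduce to bounding an exponential moment via Markov's inequality, (ii) apply the new multi-matrix Golden-Thompson bound (Theorem~\ref{thm:our_gt}) to express this moment in terms of a two-sided matrix product on $\C^{d\times d}$, (iii) tensorize to obtain a one-sided matrix product on $\C^{d^2}$, and (iv) control the spectral norm of the resulting walk operator via a Healy-style linear-algebraic argument.

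\textbf{Setup.} For a parameter $s>0$ to be chosen later, the Laplace transform trick gives $\Pr[\lambda_{\max}(S_k)\ge k\eps] \le e^{-sk\eps}\,\E[\tr\exp(sS_k)]$, where $S_k := \sum_j f(v_j)$. Applying Theorem~\ref{thm:our_gt} with $H_j := sf(v_j)$ and using Jensen's inequality (on $\exp$) against the measure $\mu$ yields the deterministic bound
\begin{align*}
\tr\exp(sS_k) \le \int_{-\pi/2}^{\pi/2} Y(\phi)^{4/\pi}\,\mathrm{d}\mu(\phi),
\qquad
Y(\phi) := \tr\!\left[\prod_{j=1}^k A_j(\phi)\prod_{j=k}^1 A_j(\phi)^*\right],
\end{align*}
where $A_j(\phi) := \exp\!\bigl(\tfrac{e^{\i\phi}}{2}\,sf(v_j)\bigr)$. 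Since $Y(\phi)=\|A_1\cdots A_k\|_F^2$, iterating $\vect(ABA^*) = (A\otimes\overline{A})\vect(B)$ recasts it as a one-sided bilinear form
\begin{align*}
Y(\phi) = \vect(I_d)^*\,W_{v_1}(\phi)\,W_{v_2}(\phi)\cdots W_{v_k}(\phi)\,\vect(I_d),
\qquad
W_v(\phi) := A_v(\phi)\otimes\overline{A_v(\phi)} \in \C^{d^2\times d^2}.
\end{align*}
The troublesome two-sided product over $\C^{d\times d}$ is now a one-sided product of $d^2\times d^2$ matrices applied to $\vect(I_d)$.

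\textbf{Walk operator.} Introducing $\Gamma(\phi) := \sum_{v\in V} e_v e_v^\top\otimes W_v(\phi)$ and the walk operator $Q(\phi) := (P\otimes I_{d^2})\Gamma(\phi)$ on $\C^n\otimes\C^{d^2}$, unrolling the expectation over the stationary walk yields
\begin{align*}
\E[Y(\phi)] = (\pi\otimes\vect(I_d))^*\,\Gamma(\phi)\,Q(\phi)^{k-1}\,(\mathbf{1}_n\otimes\vect(I_d)) \le d\cdot\|\Gamma(\phi)\|\cdot\|Q(\phi)\|^{k-1},
\end{align*}
by Cauchy-Schwarz (since $\|\pi\otimes\vect(I_d)\|\cdot\|\mathbf{1}_n\otimes\vect(I_d)\|=d$).

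\textbf{Bounding $\|Q(\phi)\|$: the main obstacle.} For $\phi\ne 0$ the matrices $A_v(\phi)$ are non-Hermitian (not even normal), so $\Gamma$ and $Q$ are likewise non-Hermitian and the symmetric perturbation theory used in the original flawed Wigderson-Xiao attempt is unavailable. I plan to adapt Healy's more robust linear-algebraic approach \cite{healy08}: Taylor expand $W_v(\phi) = I + s\,W_v^{(1)}(\phi) + O(s^2)$, observe that $\sum_v W_v^{(1)}(\phi)=0$ as an immediate consequence of $\sum_v f(v)=0$, and decompose $P\otimes I_{d^2}$ into its action on the uniform-vertex subspace (where it is the identity) versus its orthogonal complement (where $\|P\|\le\lambda$). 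Because the first-order perturbation of $\Gamma$ averages to zero across $V$, after one application of $Q$ the ``dangerous'' $O(s)$ contribution lies in the $\mathbf{1}^\perp$ factor and is contracted by $\lambda$ at every subsequent step. Carefully accounting for this cancellation yields the target estimate $\|Q(\phi)\| \le 1 + Cs^2/(1-\lambda)$ for a universal constant $C$ and all sufficiently small $s$.

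\textbf{Conclusion.} Combining the two inequalities gives $\E[Y(\phi)]\le d\exp\!\bigl(Cs^2k/(1-\lambda)\bigr)$, while the trivial deterministic bound $Y(\phi)\le d\,e^{sk|\cos\phi|}$ together with Hölder-type interpolation through the $4/\pi$ exponent produces $\E[\tr\exp(sS_k)] \le d^{2-\pi/4}\exp\!\bigl(C's^2 k/(1-\lambda)\bigr)$ for a new constant $C'$. Plugging into the Markov bound and optimizing $s=\Theta(\eps(1-\lambda))$ yields the claimed tail bound for $\lambda_{\max}$; the $\lambda_{\min}$ estimate follows by applying the same argument to $-f$.
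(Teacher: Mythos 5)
Your overall scaffolding matches the paper: Markov's inequality, the new Golden--Thompson bound, vectorization to pass from a two-sided product on $\C^{d\times d}$ to a one-sided product of $d^2\times d^2$ operators, and a Healy-flavored use of the parallel/perpendicular decomposition to exploit $\sum_v f(v)=0$. There is, however, a genuine gap in your \emph{H\"{o}lder interpolation} step, and a secondary difference in how the contraction bound is organized.

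\textbf{The H\"{o}lder step does not give the claimed bound.} You write $\tr\exp(sS_k)\le \int Y(\phi)^{4/\pi}\,\mathrm{d}\mu(\phi)$ and then propose to use the deterministic bound $Y(\phi)\le d\, e^{sk\cos\phi}$ together with H\"{o}lder to convert $\E\!\left[\int Y^{4/\pi}\,\mathrm{d}\mu\right]$ into something of order $d^{2-\pi/4}\exp\!\bigl(C's^2k/(1-\lambda)\bigr)$. But since $4/\pi>1$, the only way to pass the expectation through is via $\E[Y^{4/\pi}]\le (\sup Y)^{4/\pi-1}\,\E[Y]$, which yields an extra factor $\exp\!\bigl(sk(4/\pi-1)\cos\phi\bigr)$. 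This is \emph{linear} in $sk$, so after Markov's inequality the exponent becomes $-sk\bigl(\eps - (4/\pi-1)\bigr) + O(s^2k/(1-\lambda))$, which is useless for all $\eps < 4/\pi - 1 \approx 0.27$. In short, taking the $4/\pi$-th power \emph{inside} the integral and \emph{inside} the expectation destroys the argument. The paper avoids this by using the Schatten $p$-norm inequality $\|x\|_p \le d^{1/p-1}\|x\|_1$ for $p=\pi/4<1$, applied deterministically to the eigenvalues of $\exp(t\sum f(v_j))$. This converts the exponent $4/\pi$ on the \emph{outside} of the integral into a \emph{rescaling of the parameter} $t\mapsto \tfrac{\pi}{4}t$ plus a harmless $d^{1-\pi/4}$ factor, leaving a linear integral $\int \tr[\cdots]\,\mathrm{d}\mu$ over which expectation commutes cleanly. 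That rescaling-by-$\tfrac{\pi}{4}$ device is the crux that your sketch omits, and without it you do not get $d^{2-\pi/4}\exp(C's^2k/(1-\lambda))$.

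\textbf{The contraction estimate.} Your plan is to bound the single operator norm $\|Q(\phi)\|\le 1 + Cs^2/(1-\lambda)$; the paper instead never bounds $\|E\widetilde P\|$ directly but tracks $\|z_j^\parallel\|$ and $\|z_j^\perp\|$ through the four block estimates $\alpha_1,\ldots,\alpha_4$ of Lemma~\ref{lemma:healy} and a two-term recursion. Your claimed norm bound is in fact true in the regime $s = O(1-\lambda)$ --- the first-order perturbation of $Q^*Q$ vanishes on the parallel subspace because $\sum_v H_v=0$, the cross term can be absorbed by AM--GM at the cost of $O(s^2/(1-\lambda))$ on the parallel coefficient, and the orthogonal coefficient stays below $1$ precisely when $\lambda e^{s\cos\phi}\le 1$, which is the constraint $t\gamma\le (1-\lambda)/(4\lambda)$ appearing in the paper's Lemma~\ref{lemma:smallb}. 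So this part is a valid (arguably cleaner) reformulation, but the phrase ``carefully accounting for this cancellation'' is doing a lot of work: you still need essentially the same four block estimates and the same $s=O(1-\lambda)$ restriction as the paper's Healy-style recursion, just packaged as a single spectral norm bound.
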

\begin{remark}
Depsite the exponent of $d$ is different from Theorem \ref{thm:main}, we note that since the left hand side (the probability) is at most $1$, one can prove the same statement with any positive exponent by changing the constant $80$.
\end{remark}
\begin{proof}Due to symmetry, it suffices to prove just one of the statements. Let $t > 0$ be a parameter to be chosen later. Then
\begin{align}
\Pr \left[ \lambda_{\textnormal{max}} \left( \sum_{j=1}^k f(v_j)\right) \ge k \eps\right] &\le  \Pr \left[ \tr\left[ \expon\left(t \sum_{j=1}^k f(v_j) \right)\right] \ge \expon(tk\eps)\right] \nonumber \\
&\le \frac{\E \left[ \tr\left[ \expon\left(t \sum_{j=1}^k f(v_j) \right)\right] \right]}{\expon(tk\eps)} \label{eqn:ankit1}
\end{align}
The second inequality follows from Markov's inequality.

Now the question is how to bound $\E_{v_1,\cdots,v_k} [ \tr [ \exp(t\sum_{j=1}^k f(v_j)) ] ]$. Using Theorem~\ref{thm:better_gt} and note that $\mu(\phi)$ is a probability distribution on $[-\frac{\pi}{2}, \frac{\pi}{2}]$, we have
\begin{align*}
 & ~  \log \left( \tr \left[ \exp \left( t \sum_{j=1}^k f(v_j) \right) \right] \right) \\
\leq & ~ \frac{4}{\pi}  \int_{-\frac{\pi}{2}}^{\frac{\pi}{2}}  \log \tr \left[ \prod_{j=1}^k \exp \left( \frac{e^{\i \phi}}{2} t f( v_j) \right) \prod_{j=k}^1 \exp \left( \frac{ e^{-\i \phi} }{2} t f( v_j) \right) \right]  \mathrm{d} \mu (\phi) \\
\leq & ~ \frac{4}{\pi} \log \int_{-\frac{\pi}{2}}^{ \frac{\pi}{2} } \tr  \left[ \prod_{j=1}^k \exp \left( \frac{e^{\i \phi}}{2} t f( v_j) \right) \prod_{j=k}^1 \exp \left( \frac{ e^{-\i \phi} }{2} t f( v_j) \right) \right] \mathrm{d} \mu (\phi)
\end{align*}
where the the second step follows by concavity of $\log$ function. This implies that
\begin{align}\label{eq:better_gt_without_log_part1}
 \tr \left[ \exp \left( t \sum_{j=1}^k f(v_j) \right) \right] \leq \left(  \int_{-\frac{\pi}{2}}^{\frac{\pi}{2}} \tr  \left[ \prod_{j=1}^k \exp \left( \frac{e^{\i \phi}}{2} t f( v_j) \right) \prod_{j=k}^1 \exp \left( \frac{ e^{-\i \phi} }{2} t f( v_j) \right) \right]  \mathrm{d} \mu (\phi) \right)^{ \frac{4}{\pi} }
\end{align}
Note that $\|x\|_p \leq d^{1/p-1} \| x \|_1$ for $p\in (0,1)$, choosing $p=\pi/4$ we have
\begin{align}\label{eq:better_gt_without_log_part2}
\left( \tr \left[ \exp \left(\frac{\pi}{4} t \sum_{j=1}^k f(v_j) \right) \right] \right)^{\frac{4}{\pi}} \leq  d^{4/\pi - 1} \tr \left[ \exp \left(t \sum_{j=1}^k f(v_j) \right) \right].
\end{align}
Combining Eq.~\eqref{eq:better_gt_without_log_part1} and Eq.~\eqref{eq:better_gt_without_log_part2}, we have 
\begin{align}\label{eq:better_gt_without_log}
 \tr \left[ \exp \left( \frac{\pi}{4} t \sum_{j=1}^k f(v_j) \right) \right] \leq d^{1 - \pi/4}  \int_{-\frac{\pi}{2}}^{\frac{\pi}{2}} \tr  \left[ \prod_{j=1}^k \exp \left( \frac{e^{\i \phi}}{2} t f( v_j) \right) \prod_{j=k}^1 \exp \left( \frac{ e^{-\i \phi} }{2} t f( v_j) \right) \right]  \mathrm{d} \mu (\phi) 
\end{align}

The core of the proof is the following bound on the moment generating
function-like expression that appears above, 
thinking of $\i \phi$ as $\gamma + \i b$ with $\gamma^2 + b^2 = 1$:
\begin{lemma}\label{lemma:smallb} 
Let $G$ be a regular $\lambda$-expander on $V$, let $f$ be a function $f: V \rightarrow \mathbb{C}^{d\times d}$ and $\sum_{v \in V} f(v) = 0$, let $v_1, \cdots, v_k$ be a stationary random walk on $G$, for any $t>0$, $\gamma \geq 0$, $b>0$, $t^2(\gamma^2 +b^2) \leq 1$, and $t\gamma \leq \frac{1-\lambda}{4\lambda}$ we have
\begin{align*}
\E \left[ \tr\left[ \prod_{j=1}^k \exp \left( \frac{t f(v_j) ( \gamma + \i b ) }{2} \right) \prod_{j=k}^1 \exp \left( \frac{ t f(v_j) ( \gamma - \i b ) }{2} \right) \right] \right]  \leq d \cdot \exp \left( kt^2 (\gamma^2 + b^2) ( 1 +  \frac{ 8 }{ 1-\lambda } )  \right).
\end{align*}
\end{lemma}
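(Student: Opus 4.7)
My plan is to convert the two-sided matrix product inside the trace into a one-sided product of $d^2 \times d^2$ matrices via the vec/tensor trick, and then analyse the resulting operator product over the random walk by an adaptation of Healy's linear-algebraic argument. Writing $A(v) := \exp\!\bigl(\tfrac{t(\gamma+\i b)}{2}\, f(v)\bigr)$ and $L(v) := A(v)\otimes \overline{A(v)} \in \C^{d^2\times d^2}$, the identities $\vect(A X A^*) = (A\otimes \overline{A})\vect(X)$ and $\tr(M) = \vect(I)^*\vect(M)$ give
\begin{align*}
\tr\!\left[\prod_{j=1}^k A(v_j) \prod_{j=k}^1 A(v_j)^*\right] = \vect(I)^*\prod_{j=1}^k L(v_j)\, \vect(I).
\end{align*}
Taking expectations and exploiting stationarity together with the symmetry of $P$, the left-hand side can be repackaged as a bilinear form
\begin{align*}
Z_k \;=\; \omega^*\bigl(D\widetilde P\bigr)^{k-1} D\,\omega
\end{align*}
on the Hilbert space $\mathcal{H} := \C^n \otimes \C^{d^2}$, where $\widetilde P := P\otimes I_{d^2}$, $D$ is the $(nd^2)\times (nd^2)$ block-diagonal operator whose $v$-th diagonal block is $L(v)$, and $\omega := n^{-1/2}\mathbf{1}\otimes \vect(I)$. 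Note that $\|\omega\|_2^2 = d$, which accounts for the leading factor $d$ in the target bound.

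The heart of the proof is to control the norm growth of $\mathcal{K} := D\widetilde P$ under iteration. Decompose $\mathcal{H}$ into the $\widetilde P$-invariant stationary subspace $\Pi_\parallel \mathcal{H} := \mathrm{span}(\sqrt\pi)\otimes \C^{d^2}$ and its orthogonal complement $\Pi_\perp \mathcal{H}$: $\widetilde P$ acts as the identity on the former and as a contraction of norm at most $\lambda$ on the latter. Taylor expanding $L(v) = I + \tfrac{t(\gamma+\i b)}{2}\,(f(v)\otimes I) + \tfrac{t(\gamma-\i b)}{2}\,(I\otimes f(v)^\top) + O(t^2(\gamma^2+b^2))$, the zero-mean hypothesis $\sum_v f(v)=0$ forces the order-$t$ part of the parallel-to-parallel component of $D$ to vanish, so that $\Pi_\parallel D \Pi_\parallel = \Pi_\parallel + O(t^2(\gamma^2+b^2))$. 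The cross-blocks $\Pi_\parallel D\Pi_\perp$ and $\Pi_\perp D\Pi_\parallel$ are only of size $O(t\sqrt{\gamma^2+b^2})$, but they are contracted by a factor $\lambda$ at the next application of $\widetilde P$. Writing out a coupled recursion for the parallel and perpendicular components of $\mathcal{K}^j\omega$ and summing the resulting geometric series in $\lambda$, each step contributes a multiplicative factor of $\exp\!\bigl(O(t^2(\gamma^2+b^2)/(1-\lambda))\bigr)$ to $\|\mathcal{K}^j\omega\|$, which compounds over $k$ steps to the claimed $\exp\!\bigl(kt^2(\gamma^2+b^2)(1 + 8/(1-\lambda))\bigr)$.

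The main obstacle is that $L(v)$, and hence $\mathcal{K}$, are neither Hermitian nor normal when $b\neq 0$: this is precisely the pathology flagged in the technical overview which renders spectral-theoretic arguments (diagonalisation, functional calculus, $\|\mathcal{K}\| = $ spectral radius, etc.) useless here, and it is why the perturbation-theoretic approach of \cite{wx05} fails while Healy's coarser but more robust linear-algebraic argument survives. The hypothesis $t\gamma \leq (1-\lambda)/(4\lambda)$ enters exactly to guarantee convergence of the geometric series that arises while telescoping the $\Pi_\perp$ recursion (so that the perpendicular-to-perpendicular growth $1 + O(t\gamma \lambda)$ does not blow up against $\lambda^{-1}$-many perpendicular steps), while $t^2(\gamma^2+b^2) \leq 1$ ensures that the higher-order Taylor remainders of $A(v)$ and $\overline{A(v)}$ are indeed dominated by the named $O(t^2(\gamma^2+b^2))$ terms. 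Assembling the per-step factor through the telescope, together with $\|\omega\|_2^2 = d$, yields Lemma~\ref{lemma:smallb}.
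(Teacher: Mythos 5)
Your proposal is correct and follows essentially the same route as the paper: tensorize the two-sided product to a one-sided walk operator on $\C^n\otimes\C^{d^2}$ (the paper's $z_0^*(E\widetilde P)^k z_0$ is your $\omega^*(D\widetilde P)^{k-1}D\omega$ since $\widetilde P\omega=\omega$), decompose into parallel/perpendicular components, and run Healy's four-parameter recursion with the geometric series in the perpendicular block giving the $1/(1-\alpha_4)\le 2/(1-\lambda)$ factor. The only quibble is descriptive: the role of $t\gamma\le(1-\lambda)/(4\lambda)$ is precisely to force $\alpha_4=\lambda e^{t\gamma}\le(1+\lambda)/2<1$ so the geometric series converges with the stated constant, rather than controlling a ``$1+O(t\gamma\lambda)$ growth over $\lambda^{-1}$ steps''; and your $\overline{A(v)}$ in the second tensor factor is in fact the correct choice to make $\tr(M)=\vect(I)^*\,(A\otimes\overline A)\,\vect(I)$ exact, a point where the paper's display of $M_v$ is slightly loose (though harmlessly so since $\|f(v)^\top\|=\|f(v)\|$).
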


Assuming this lemma, we can easily complete the proof of the theorem as:
\begin{align}\label{eq:bounding_main_expectation}
 & ~ \E_{v_1,\cdots,v_k} \left[  \tr \left[ \exp \left( \frac{\pi}{4} t \sum_{j=1}^k f(v_j) \right) \right] \right] \notag \\
\leq & ~ d^{1-\pi/4} \E_{v_1,\cdots,v_k} \left[  \int_{-\frac{\pi}{2} }^{\frac{\pi}{2}} \tr  \left[ \prod_{j=1}^k \exp \left( \frac{e^{\i \phi}}{2} t f( v_j) \right) \prod_{j=k}^1 \exp \left( \frac{ e^{-\i \phi} }{2} t f( v_j) \right) \right]   \mathrm{d} \mu (\phi) \right] \notag  \\
= & ~ d^{1-\pi/4} \int_{-\frac{\pi}{2}}^{\frac{\pi}{2}} \E_{v_1,\cdots,v_k}  \left[ \tr  \left[ \prod_{j=1}^k \exp \left( \frac{e^{\i \phi}}{2} t f( v_j) \right) \prod_{j=k}^1 \exp \left( \frac{ e^{-\i \phi} }{2} t f( v_j) \right) \right] \right] \mathrm{d} \mu (\phi) \notag  \\
\leq & ~ d^{1-\pi/4}  \int_{-\frac{\pi}{2}}^{\frac{\pi}{2}} d \exp \left( kt^2 | e^{\i \phi} |^2 (1+ \frac{8}{1-\lambda}) \right)   \mathrm{d} \mu (\phi) \notag  \\
= & ~ d^{2-\pi/4} \exp \left(kt^2 (1+ \frac{8}{1-\lambda}) \right) \int_{-\frac{\pi}{2}}^{\frac{\pi}{2}}  \mathrm{d} \mu ({\phi}) \notag \\
= & ~ d^{2-\pi/4} \exp \left(kt^2 (1+ \frac{8}{1-\lambda}) \right) ,
\end{align}
where the first step follows by the Equation~\eqref{eq:better_gt_without_log}, the second step follows by swapping $\E$ and $\int$, the third step follows by Lemma~\ref{lemma:smallb} , the fourth step follows by $|e^{\i \phi}|=1$, and the last step follows by $\int_{-\frac{\pi}{2}}^{\frac{\pi}{2}} \mathrm{d} \mu (\phi) = 1$. 

Finally, putting it all together, 
\begin{align*}
\Pr_{v_1,\cdots,v_k} \left[\lambda_{\max} \left( \sum_{j=1}^k f(v_j) \right) \geq k\epsilon \right] \leq & ~ d^{2-\pi/4} \cdot \exp \left( (4/\pi)^2 k t^2 \frac{9}{1-\lambda} - kt \epsilon \right) \\ 
= & ~ d^{2-\pi/4} \cdot \exp \left( (4/\pi)^2 k \epsilon^2 (1-\lambda)^2 \frac{1}{36^2} \frac{9}{1-\lambda} - k \frac{(1-\lambda) \epsilon}{36} \epsilon \right) \\
\leq & ~ d^{2-\pi/4} \cdot \exp \left( - k \epsilon^2 (1-\lambda) /72 \right).
\end{align*}
where the first step follows by Eq.~\eqref{eq:bounding_main_expectation} the second step follows by choosing $t=(1-\lambda)\epsilon /36$.
\end{proof}

We now give the proof of Lemma \ref{lemma:smallb}
\begin{proof}[Proof of Lemma \ref{lemma:smallb}]
We start by writing the expected trace expression in terms of the transition
matrix of the random walk. This is an analogue of a step which is common to most of the expander chernoff bound proofs in the scalar case. Let $P$ be the normalized adjacency matrix of $G$ and let $\widetilde{P} = P \otimes I_{d^2}$. Let $E$ denote the $nd^2 \times nd^2$ block diagonal matrix where the $v^{\text{th}}$ diagonal block is the matrix 
\begin{align}\label{eq:def_E_M_v}
M_v = \expon\left( \frac{t f(v)(\gamma+\i b)}{2}\right) \otimes \expon\left( \frac{t f(v) (\gamma -\i b)}{2}\right).
\end{align}
Then $\left(E \widetilde{P}\right)^k$ is an $nd^2 \times nd^2$ block matrix whose $(u,v)^{\text{th}}$ ($d^2 \times d^2$) block is given by the matrix
\begin{align}\label{eq:def_E_wtP_k}
\sum_{v_1,\ldots,v_{k-1}} P_{u,v_1} \cdot \left( \prod_{j=1}^{k-2} P_{v_j, v_{j+1}} \right) \cdot  P_{v_{k-1}, v} \cdot M_u \cdot \left( \prod_{j=1}^{k-1} M_{v_j} \right)
\end{align}
Let $z_0 \in \C^{nd^2}$ be the vector $\frac{\mathbf{1}}{\sqrt{n}} \otimes
\vect(I_d)$. Here $\mathbf{1}$ is the all $1$'s vector and $\vect(I_d)$ is the
vector form of the identity matrix. Then, by applying 
$$
\langle \vect(I_d), A_1 \otimes A_2 \: \vect(I_d) \rangle = \tr\left[ A_1 A_2^T\right]
$$
it follows that for a stationary random walk $v_1,\ldots,v_k$:
\begin{align*}
\E \left[ \tr\left[ \prod_{j = 1}^k \expon\left( \frac{t f(v_j)(\gamma +\i b)}{2}\right) \prod_{j = k}^1 \expon\left( \frac{t f(v_j)(\gamma-\i b)}{2}\right)\right] \right]
&=\E \left[ \left\langle \vect(I_d),\prod_{i=1}^k M_{v_i} \vect(I_d)\right\rangle \right]
\\&=
\left\langle z_0, \left(E \widetilde{P}\right)^k z_0 \right\rangle
\end{align*}
Hence we can focus our attention on $\left\langle z_0, \left(E
\widetilde{P}\right)^k z_0 \right\rangle$. 

Let $\mathbf{1}$ be the all $1$'s vector. For a vector $z \in
\mathbb{C}^{nd^2}$, let $z^{\parallel}$ denote the component of $z$ which lies
in the subspace spanned by the $d^2$ vectors $\mathbf{1} \otimes e_i$, $1 \le i
\le d^2$. Let $z^{\bot}$ denote the component in the orthogonal space.
Letting $z_j = \left(E
\widetilde{P}\right)^j z_0$, we are interested in bounding
\begin{align*}
\langle z_0, z_k \rangle = \langle z_0, z_k^{\parallel} \rangle \le \|z_0\| \cdot \|z_k^{\parallel}\| = \sqrt{d} \cdot  \|z_k^{\parallel}\|
\end{align*}

The following lemma is the analogue of the main lemma in Healy's proof for the
scalar valued expander Chernoff bound \cite{healy08}. Roughly speaking, it
tracks how much a vector can move in and out of the subspace we are interested in
as the operator $E\widetilde{P}$ is applied.

\begin{lemma}\label{lemma:healy} Given four parameters $\lambda \in [0,1]$, $\gamma \geq 0$, $\ell \geq 0$, and $t >0$. Let $G$ be a regular $\lambda$-expander on $V$. Suppose each vertex $v$ is assigned a matrix $H_v \in \C^{d^2 \times d^2}$ s.t. $\|H_v\| \le \ell$ and $\sum_v H_v = 0$.  Let $P$ be the normalized adjacency matrix of $G$ and let $\widetilde{P} = P \otimes I_{d^2}$. Let $E$ denote the $nd^2 \times nd^2$ block diagonal matrix where the $v$-th diagonal block is the matrix $\expon(t H_v)$. Also suppose that $\|E\| = \max_{v\in V} \|\expon(t H_v)\| \le \expon(  \gamma t)$.
Then for any $z \in \C^{nd^2}$, we have:
\begin{eqnarray*}
1.& \|(E \widetilde{P} z^{\parallel})^{\parallel}\| \le \alpha_1 \|z^{\parallel}\| & \text{~where~} \alpha_1 = \expon(t \ell) - t \ell \\
2.& \|(E \widetilde{P} z^{\parallel})^{\bot}\| \le  \alpha_2 \|z^{\parallel}\| & \text{~where~} \alpha_2 =  \expon(t \ell) - 1 \\
3.& \|(E \widetilde{P} z^{\bot})^{\parallel}\| \le \alpha_3 \|z^{\bot}\| & \text{~where~} \alpha_3 = \lambda \cdot \left( \expon(t \ell) - 1 \right) \\
4.& \|(E \widetilde{P} z^{\bot})^{\bot}\| \le \alpha_4 \|z^{\bot}\| & \text{~where~} \alpha_4 = \lambda \cdot \exp(t\gamma) 
\end{eqnarray*}
\end{lemma}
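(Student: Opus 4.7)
The plan is to exploit the fact that $\widetilde{P} = P \otimes I_{d^2}$ preserves the parallel/orthogonal decomposition of $\C^{nd^2}$: since $P\mathbf{1} = \mathbf{1}$ for a regular graph, $\widetilde{P}$ acts as the identity on the parallel subspace (so $\widetilde{P} z^{\parallel} = z^{\parallel}$) and as an operator of spectral norm at most $\lambda$ on its orthogonal complement, mapping orthogonal vectors to orthogonal vectors. This reduces each of the four estimates to analyzing how $E$ alone mixes the two subspaces.

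Bound 4 is essentially immediate: $\|(E\widetilde{P} z^{\bot})^{\bot}\| \le \|E\widetilde{P} z^{\bot}\| \le \|E\|\cdot\|\widetilde{P} z^{\bot}\| \le \lambda \exp(t\gamma)\|z^{\bot}\|$. For bounds 2 and 3 I would use the decomposition $E = I + (E-I)$, together with the block-diagonal estimate
\begin{align*}
\|E - I\| = \max_{v\in V} \|\exp(tH_v) - I\| \le \exp(t\ell) - 1.
\end{align*}
For bound 2, since $z^{\parallel}$ has no orthogonal part, $(E z^{\parallel})^{\bot} = ((E-I)z^{\parallel})^{\bot}$, which is at most $(\exp(t\ell)-1)\|z^{\parallel}\|$ in norm. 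For bound 3, set $y := \widetilde{P} z^{\bot}$, which is still orthogonal and satisfies $\|y\| \le \lambda \|z^{\bot}\|$; then $(E y)^{\parallel} = ((E-I)y)^{\parallel}$, giving the stated bound $\lambda(\exp(t\ell)-1)\|z^{\bot}\|$.

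The only substantive case is bound 1, which is the sole estimate that exploits the mean-zero hypothesis $\sum_v H_v = 0$ to save an extra factor of $t\ell$. Writing $z^{\parallel} = \mathbf{1}\otimes w$ for some $w \in \C^{d^2}$, we have $\|z^{\parallel}\| = \sqrt{n}\,\|w\|$, and $E z^{\parallel}$ has $v$-th block $\exp(tH_v)\,w$; its orthogonal projection onto the parallel subspace is $\mathbf{1}\otimes \bar u$ with $\bar u = \frac{1}{n}\sum_v \exp(tH_v)\, w$. The core computation is then the Taylor expansion
\begin{align*}
\frac{1}{n}\sum_v \exp(tH_v) = I + \frac{t}{n}\sum_v H_v + \sum_{k\ge 2}\frac{t^k}{n\cdot k!}\sum_v H_v^k,
\end{align*}
in which the linear term vanishes by hypothesis, leaving a tail of operator norm at most $\sum_{k\ge 2}(t\ell)^k/k! = \exp(t\ell) - 1 - t\ell$. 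Thus $\|\bar u\| \le (\exp(t\ell) - t\ell)\|w\|$, and multiplying through by $\sqrt{n}$ delivers $\alpha_1$. A direct norm bound on $E$ would give only $\exp(t\ell)$ here; the subtlety is that cancelling this first-order term is precisely what allows Lemma~\ref{lemma:smallb} downstream to produce a subgaussian rate rather than a subexponential one, so while no step is technically deep, the correct bookkeeping of which estimate exploits the mean-zero hypothesis is the crux.
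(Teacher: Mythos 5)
Your proof is correct and follows essentially the same route as the paper's: reduce each bound to the action of $E$ by using that $\widetilde{P}$ fixes the parallel subspace and contracts the orthogonal one by $\lambda$, split $E = I + (E-I)$ with $\|E-I\| \le \exp(t\ell)-1$ for parts~2 and~3, and use the Taylor expansion together with $\sum_v H_v = 0$ to cancel the linear term in part~1. The bookkeeping and the identification of part~1 as the unique place where the mean-zero hypothesis buys the extra factor of $t\ell$ both match the paper's argument.
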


\begin{proof}[Proof of Lemma \ref{lemma:healy}]

Part 1.

Note that $(E \widetilde{P} z^{\parallel})^{\parallel} = (E z^{\parallel})^{\parallel}$. Let ${\bf 1} \in \R^n$ denote all ones vector, suppose $z^{\parallel} = \mathbf{1} \tensor \mathbf{w}$ for some $\mathbf{w} \in \C^{d^2}$. Then $\|z^{\parallel}\| = \sqrt{n} \cdot \|\mathbf{w}\|$ and 
\begin{align*}
(E z^{\parallel})^{\parallel} = \mathbf{1} \otimes \left( \frac{1}{n} \sum_{v \in V} \expon(H_v) \mathbf{w} \right).
\end{align*}

We can upper bound $\| \frac{1}{n} \sum_{v \in V} \exp( t H_v)   \|$ in the following way,
\begin{align*}
\left\| \frac{1}{n} \sum_{v \in V} \exp(t H_v)  \right\| = & ~ \left\| \frac{1}{n} \sum_{v\in V} \sum_{j=0}^{\infty} \frac{t^j H_v^j}{j!} \right\| \\
= & ~ \left\| I + \frac{1}{n} \sum_{v\in V} \sum_{j=2}^{\infty} \frac{t^j H_v^j}{j!} \right\| \\
\leq & ~ 1 + \frac{1}{n} \sum_{v\in V} \sum_{j \geq 2} \frac{t^j}{j!} \| H_v \|^j \\
= & ~ 1 + \sum_{j\geq 2} \frac{ (t\ell)^j }{ j! } \\
= & ~ \exp(t\ell) -t\ell,
\end{align*}
where the first step follows by Taylor expansion, the second step follows by $\sum_{v\in V} H_v = 0$, the third step follows by triangle inequality, the fourth step follows by $|V|=n$ and $\| H_v \| \leq \ell$, and last step follows by Taylor expansion.

Thus, 
\begin{align*}
\| (E z^{\parallel} )^{\parallel} \| = \sqrt{n} \left\| \frac{1}{n} \sum_{v\in V} \exp(t H_v) w \right\| \leq \sqrt{n} \| w \| (\exp(t\ell)-t\ell) = \| z^{\parallel} \| (\exp(t\ell)-t\ell).
\end{align*}

Part 2. Note that $(E\wt{P} z^{\parallel})^{\bot} = (E z^{\parallel})^{\bot} = ( (E-I) z^{\parallel} )^{\bot}$. and $(z^{\parallel})^{\bot}=0$. We can upper bound $\| ( (E-I)z^{\parallel} )^{\bot} \|$ in the following way,
\begin{align*}
\| ( (E-I) z^{\parallel} )^{\bot} \| \leq & ~ \| (E-I) z^{\parallel} \| \\
\leq & ~ \| E - I \| \cdot \| z^{\parallel} \| \\
= & ~ \max_{v\in V} \| \exp(t H_v) - I \| \cdot \| z^{\parallel} \| \\
= & ~ \max_{v\in V} \left\| \sum_{j=1}^{\infty} \frac{t^j}{j!} H_v^j \right\| \cdot \| z^{\parallel} \| \\
\leq & ~ \left(\sum_{j=1}^{\infty} \frac{t^j \ell^j}{j!} \right) \cdot \| z^{\parallel} \| \\
= & ~ (\exp(t\ell) - 1) \cdot \| z^{\parallel} \|,
\end{align*}
where the second step follows by $\| A x \| \leq \| A \|\cdot \| x \|$, the third step follows by definition of $E$, the fourth step follows by Taylor expansion, the fifth step follows by triangle inequality and $\| H_v \| \leq \ell$, and the last step follows by Taylor expansion. 

Part 3. Note that 
$$
(E \widetilde{P} z^{\bot})^{\parallel} =((E-I)\widetilde{P} z^{\bot})^{\parallel}
$$
This is because  $(\widetilde{P} z^{\bot})^{\parallel} = 0$ since $\widetilde{P}$ preserves the property of being orthogonal to the space spanned by the vectors $\mathbf{1} \otimes e_i$ (these are the top eigenvectors of $\widetilde{P}$). 
Hence we can bound
\begin{align*}
\left\| ((E-I)\widetilde{P} z^{\bot})^{\parallel} \right\| &\le  \left\| (E-I)\widetilde{P} z^{\bot} \right\| \\
&\le \|E-I\| \cdot  \left\|\widetilde{P} z^{\bot} \right\| \\
&\le \left( \expon(t \ell) - 1 \right)  \cdot \lambda \cdot \left\|z^{\bot} \right\|
\end{align*}
Third inequality follows from the fact that $\|E-I\| \le \expon(t \ell) - 1$ and that $G$ is a $\lambda$-expander. 

Part 4. 
We can bound 
\begin{align*}
\| (E \widetilde{P} z^\bot )^{\bot} \| \leq \| E \widetilde{P} z^{\bot} \| \leq \exp(t\gamma) \cdot \lambda \| z^{\bot} \|
\end{align*}
where the second step follows by $\| E\| \leq \exp(\gamma t)$ and $G$ is a $\lambda$-expander.

\end{proof}

We now use the above Lemma \ref{lemma:healy} to analyze the evolution of $z_j^{\parallel}$ and $z_j^{\bot}$. 
Recall the definition of $H_v$,
\begin{align*}
H_v  = \frac{f(v) (\gamma+\i b)}{2} \otimes I_d + I_d \otimes \frac{f(v) (\gamma-\i b)}{2}.
\end{align*}
which means
\begin{align*}
\exp(t H_v)= & ~ \exp \left( \frac{ t f(v) (\gamma+\i b)}{2} \otimes I_d + I_d \otimes \frac{ t f(v) (\gamma-\i b)}{2} \right)  \\
= & ~ \exp \left( \frac{ t f(v) (\gamma + \i b) }{2} \right) \otimes \exp \left( \frac{t f(v) ( \gamma - \i b)}{2} \right) \\
= & ~ M_v,
\end{align*}
where the the first step follows by definition of $H_v$, the second step follows by $\exp(A\otimes I_d + I_d \otimes B) = \exp (A) \otimes \exp(B)$, and the last step follows by Eq.~\eqref{eq:def_E_M_v}.

We can upper bound $\| H_v \| \leq \sqrt{\gamma^2 + b^2}$ and then set $\ell= \sqrt{\gamma^2 + b^2}$. We can also upper bound $\| \exp(t H_v) \|$
\begin{align*}
\| \exp(t H_v) \| = \| \exp(t \cdot \text{Re}(H_v) ) \| = \left\| \exp \left( \gamma t \left( \frac{f(v)}{2} \otimes I_d + I_d \otimes \frac{f(v)}{2} \right) \right) \right\| \leq \exp(\gamma t).
\end{align*}

Note that $\sum_{v\in V} H_v=0$ since $\sum_{v\in V} f(v) =0$.

\begin{claim}\label{cla:bound_z_i_bot}
$\| z_i^{\bot} \| \leq \frac{\alpha_2}{1-\alpha_4} \max_{j<i} \| z_j^{\parallel} \|$. 
\end{claim}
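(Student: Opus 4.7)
The plan is to derive a one-step recursion for $\|z_i^{\bot}\|$ in terms of $\|z_{i-1}^{\parallel}\|$ and $\|z_{i-1}^{\bot}\|$ by applying Lemma~\ref{lemma:healy}, and then unroll it. Since $z_i = E\widetilde{P} z_{i-1}$, I would write
\begin{align*}
z_i^{\bot} = \bigl(E\widetilde{P}\, z_{i-1}^{\parallel}\bigr)^{\bot} + \bigl(E\widetilde{P}\, z_{i-1}^{\bot}\bigr)^{\bot},
\end{align*}
and then invoke parts 2 and 4 of Lemma~\ref{lemma:healy} to obtain
\begin{align*}
\|z_i^{\bot}\| \leq \alpha_2 \|z_{i-1}^{\parallel}\| + \alpha_4 \|z_{i-1}^{\bot}\|.
\end{align*}

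Next I would iterate this inequality all the way down to $z_0$. The crucial base case is $z_0^{\bot}=0$, which is immediate from the definition $z_0=\tfrac{\mathbf 1}{\sqrt n}\otimes\vect(I_d)$ since this vector lies entirely in the span of $\{\mathbf 1\otimes e_i\}$. Unrolling gives
\begin{align*}
\|z_i^{\bot}\| \;\leq\; \alpha_2 \sum_{j=0}^{i-1} \alpha_4^{\,i-1-j}\, \|z_j^{\parallel}\|
\;\leq\; \alpha_2 \Bigl(\max_{j<i}\|z_j^{\parallel}\|\Bigr) \sum_{s=0}^{i-1}\alpha_4^{\,s}
\;\leq\; \frac{\alpha_2}{1-\alpha_4}\,\max_{j<i}\|z_j^{\parallel}\|,
\end{align*}
provided $\alpha_4<1$ so that the geometric sum converges.

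The only potentially subtle point is checking that $\alpha_4=\lambda\exp(t\gamma)<1$ under the lemma's hypotheses. This follows since the assumption $t\gamma\leq\tfrac{1-\lambda}{4\lambda}$ implies $\exp(t\gamma)\leq 1+2t\gamma\leq 1+\tfrac{1-\lambda}{2\lambda}$ for the range in question, giving $\lambda\exp(t\gamma)\leq \lambda+\tfrac{1-\lambda}{2}<1$; this also ensures the denominator $1-\alpha_4$ is bounded away from zero (in fact $\gtrsim (1-\lambda)/2$), which will matter when this claim is fed into the final moment-generating-function bound. No step should be a serious obstacle: the argument is a direct induction on $i$ combined with a geometric-series summation, of the same flavor as Healy's scalar argument.
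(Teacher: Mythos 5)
Your proof is correct and follows essentially the same route as the paper: decompose $z_i^{\bot}$ via the triangle inequality, apply parts 2 and 4 of Lemma~\ref{lemma:healy} to get the one-step recursion $\|z_i^{\bot}\| \le \alpha_2\|z_{i-1}^{\parallel}\| + \alpha_4\|z_{i-1}^{\bot}\|$, and unroll to a geometric series using the base case $z_0^{\bot}=0$. Your write-up is somewhat more careful than the paper's in spelling out the base case and verifying $\alpha_4<1$ from the hypothesis $t\gamma \le \tfrac{1-\lambda}{4\lambda}$, but the argument is identical in substance.
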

\begin{proof}
\begin{align*}
\| z_i^{\bot} \| = & ~ \| ( E \widetilde{P} z_{i-1} )^{\bot} \| \\
\leq & ~ \| (E \widetilde{P} z_{i-1}^{\parallel} )^{\bot} \| + \| (E \widetilde{P} z_{i-1}^{\bot} )^{\bot} \| \\
\leq & ~ \alpha_2 \| z_{i-1}^{\parallel} \| + \alpha_4 \| z_{i-1}^{\bot} \| \\
\leq & ~ (\alpha_2 + \alpha_2 \alpha_4 + \alpha_2 \alpha_4 + \cdots ) \cdot \max_{j<i} \| z_j^{\parallel} \| \\
\leq & ~ \frac{\alpha_2}{1-\alpha_4} \max_{j < i} \| z_j^{\parallel} \|,
\end{align*}
where the first step follows by definition of $z_i$, the second step follows by triangle inequality, the third step follows by part 2 and 4 of Lemma~\ref{lemma:healy}.
\end{proof}

\begin{claim}\label{cla:bound_z_i_parallel}
$\| z_i^{\parallel} \| \leq (\alpha_1 + \frac{\alpha_2 \alpha_3}{1-\alpha_4}) \max_{j<i} \| z_j^{\parallel} \|$.
\end{claim}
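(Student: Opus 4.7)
The plan is to mimic the short calculation used in Claim~\ref{cla:bound_z_i_bot}, but tracking the parallel component of $z_i$ instead of the orthogonal one. Specifically, since $z_i = E\widetilde{P} z_{i-1}$, I would decompose $z_{i-1}$ into its parallel and orthogonal parts and take projections onto the parallel subspace:
\begin{align*}
z_i^{\parallel} = \bigl(E\widetilde{P} z_{i-1}^{\parallel}\bigr)^{\parallel} + \bigl(E\widetilde{P} z_{i-1}^{\bot}\bigr)^{\parallel}.
\end{align*}
Applying the triangle inequality, and then invoking parts~1 and~3 of Lemma~\ref{lemma:healy}, gives
\begin{align*}
\|z_i^{\parallel}\| \le \alpha_1\,\|z_{i-1}^{\parallel}\| + \alpha_3\,\|z_{i-1}^{\bot}\|.
\end{align*}

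Next, I would feed Claim~\ref{cla:bound_z_i_bot} into the second term. That claim bounds $\|z_{i-1}^{\bot}\|$ by $\frac{\alpha_2}{1-\alpha_4}\max_{j<i-1}\|z_j^{\parallel}\|$, and since $\max_{j<i-1}\|z_j^{\parallel}\|\le \max_{j<i}\|z_j^{\parallel}\|$ and $\|z_{i-1}^{\parallel}\|\le \max_{j<i}\|z_j^{\parallel}\|$, combining the two bounds yields exactly the desired estimate
\begin{align*}
\|z_i^{\parallel}\| \le \Bigl(\alpha_1 + \tfrac{\alpha_2\alpha_3}{1-\alpha_4}\Bigr)\max_{j<i}\|z_j^{\parallel}\|.
\end{align*}

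There is no real obstacle here: once Lemma~\ref{lemma:healy} and Claim~\ref{cla:bound_z_i_bot} are in hand, this is a two-line calculation that simply records the ``parallel $\to$ parallel direct route'' and the ``parallel $\to$ orthogonal $\to\cdots\to$ orthogonal $\to$ parallel'' route through the block-off-diagonal structure of $E\widetilde{P}$ in the chosen decomposition. The only mild subtlety to flag is the implicit assumption $\alpha_4 < 1$ that makes $\frac{1}{1-\alpha_4}$ meaningful; this is guaranteed by the hypotheses of Lemma~\ref{lemma:smallb} (in particular $t\gamma \le (1-\lambda)/(4\lambda)$, which keeps $\alpha_4 = \lambda\exp(t\gamma)$ bounded strictly below one), so this does not require any new work at this stage of the argument.
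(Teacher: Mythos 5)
Your proof is correct and follows the paper's argument exactly: decompose $z_{i-1}$, apply the triangle inequality, invoke parts 1 and 3 of Lemma~\ref{lemma:healy}, and substitute Claim~\ref{cla:bound_z_i_bot} for the orthogonal term. The remark about $\alpha_4 < 1$ being ensured by the hypotheses of Lemma~\ref{lemma:smallb} is a valid observation but not a point of divergence from the paper.
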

\begin{proof}
\begin{align*}
\| z_i^{\parallel} \| = & ~ \| ( E \widetilde{P} z_{i-1} )^{\parallel} \| \\
\leq & ~ \| (E \widetilde{P} z_{i-1}^{\parallel} )^{\parallel} \| + \|  (E \widetilde{P} z_{i-1}^{\bot} )^{\parallel} \| \\
\leq & ~ \alpha_1 \| z_{i-1}^{\parallel} \| + \alpha_3 \| z_{i-1}^{\bot} \| \\
\leq & ~ \alpha_1 \| z_{i-1}^{\parallel} \| + \alpha_3 \frac{\alpha_2}{1-\alpha_4} \max_{j < i-1} \| z_j^{\parallel} \| \\
\leq & ~ (\alpha_1 + \frac{\alpha_2 \alpha_3}{1-\alpha_4} ),
 \end{align*}
where the first step follows by definition of $z_i$, the second step follows by triangle inequality, the third step follows by part 1 and 3 of Lemma~\ref{lemma:healy}, the fourth step follows by Claim~\ref{cla:bound_z_i_bot}.
\end{proof}

Combining Claim~\ref{cla:bound_z_i_bot} and Claim~\ref{cla:bound_z_i_parallel} gives 
\begin{align*}
\| z_k^{\parallel} \| \leq (\alpha_1 + \frac{\alpha_2 \alpha_3}{1-\alpha_4})^k  \| z_0^{\parallel} \| = \sqrt{d} \cdot (\alpha_1 + \frac{\alpha_2 \alpha_3}{1-\alpha_4})^k, 
\end{align*}
which implies that
\begin{align*}
\left \langle z_0 , (E \widetilde{P})^k z_0 \right \rangle \leq d \cdot (\alpha_1 + \frac{\alpha_2 \alpha_3}{1-\alpha_4})^k.
\end{align*}

Now the question is how to bound $(\alpha_1 + \frac{\alpha_2 \alpha_3}{1-\alpha_4})^k$.
We can upper bound $\alpha_1$, $\alpha_2 \alpha_3$ and $\alpha_4$ in the following sense,
\begin{align*}
\alpha_1 = \exp(t\ell) -t\ell \leq 1+ t^2 \ell^2 = 1+ t^2 (\gamma^2 +b^2) , 
\end{align*}
and
\begin{align*}
\alpha_2 \alpha_3 = \lambda (\exp(t\ell)-1)^2 \leq \lambda( 2t\ell  )^2  = 4 \lambda t^2 (\gamma^2 + b^2)
\end{align*}
where the second step follows by $t\ell < 1$ (because $\exp(x) \leq 1+2x, \forall x \in [0,1]$),
\begin{align*}
\alpha_4 = \lambda \cdot \exp(t\gamma) \leq \lambda (1 + 2t \gamma) \leq \frac{1}{2} + \frac{1}{2} \lambda
\end{align*}
where the second step follows by $t \gamma < 1$, and the third step follows by $t\gamma \leq (1-\lambda) /4\lambda$. 

Thus,
\begin{align*}
(\alpha_1+ \frac{\alpha_2 \cdot \alpha_3}{1-\alpha_4})^k \leq & ~ \left( 1+ t^2 (\gamma^2 +  b^2) + \frac{ 4 \lambda t^2 (\gamma^2 +b^2)}{\frac{1}{2} - \frac{1}{2}\lambda} \right)^k \\
\leq & ~ \exp \left( k t^2 (\gamma^2 +b^2) (1+ \frac{8}{1-\lambda}) \right).
\end{align*}

\end{proof}
\begin{remark} As is the case with Healy's proof \cite{healy08}, our proof also works for the case when there are different mean zero functions $f_1,\ldots, f_k$ for the different steps of the walk and also when there are $k$ $\lambda$-expanders $G_1,\ldots, G_k$ and the $j^{\text{th}}$ step of the walk is taken according to $G_j$. 
\end{remark}

\begin{remark} We suspect that with appropriate modifications, our proof should generalize to random walks on irregular undirected graphs (or reversible Markov chains) as was done for Healy's proof in \cite{ChungLLM12}.
\end{remark}

\begin{remark} Although we have stated the theorem for Hermitian matrices, the
	same result can be obtained for general matrices by a standard dilation
	trick, namely replacing every $d\times d$ matrix $M$ that appears with
	the $2d\times 2d$ Hermitian matrix $$\begin{bmatrix} 0 & M\\ M^* &
	0\end{bmatrix},$$ whose norm is always within a factor of two of
$M$.\end{remark}

 \section{Proof of Theorem \ref{thm:martingale}}\label{sec:martingale}

\begin{proof}Observe that for every $i=2,\ldots, k$ we have
$$ \E [ f(v_i) ~|~ v_{i-1} ] = \sum_{u\sim v_{i-1}}P(v_{i-1},u) f(u) = Pf(v_{i-1}),$$
whence the random vectors
$$ Y_i\up{1} := f(v_i)-Pf(v_{i-1})$$
satisfy
$$ \E [ Y_i\up{1} ~|~ v_1,\ldots,v_{i-1} ]=0$$
and thus form a martingale difference sequence with respect to the filtration generated by initial segments
of $v_1,\ldots, v_k$. Denoting $Y\up{1}_1:=f(v_1)$, 
we can write the sum of interest as a martingale part plus a remainder, which is
a sum of $k-1$ (i.e., one fewer) random variables:
$$ S = \sum_{i=1}^k Y\up{1}_i + \sum_{i=1}^{k-1}Pf(v_i).$$

Notice that $Pf$ is also a mean zero function on $G$, and by Jensen's inequality we have
$$ \|(Pf)(v)\|_*\le M:=\max_{v\in V}\|f(v)\|_*\qquad\textrm{for all $v$.}$$ The key point is that the
remainder terms $Pf(v_i)$ are smaller on average than the original terms $f(v_i)$ in 
squared Euclidean norm, because $P$ is a contraction orthogonal to the
constant vector; in particular, by considering the action of $P$ on each
coordinate of $f$ separately, we have:
\begin{equation}\label{eqn:shrink}\sum_v \|Pf(v)\|_2^2 \le
\lambda\cdot\sum_v \|f(v)\|_2^2.\end{equation}

Iterating this construction on the remainder a total of $T\le k$ times, we
obtain a sequence of martingales $1\le t\le T$:
$$Y\up{t}_1:=P^{t-1}f(v_1)\qquad Y\up{t}_i:= P^{t-1}f(v_i)-P^tf(v_{i-1}),\qquad
i=1\ldots, k-(t-1)$$
which are related to the original sum as:
$$ S = \sum_{t=1}^T \sum_{i=1}^{k-t+1}Y\up{t}_i + \sum_{i=1}^{k-T}P^Tf(v_i).$$
Interchanging the order of summation, we find that the random matrices
$$Z_i:= \sum_{t=1}^{\min\{(k+1-i), T\}}Y_i\up{t}$$
themselves form a martingale difference sequence, with each
$$\|Z_i\|_*\le \sum_t \|Y_i\up{t}\|_*\le TM.$$

We bound the error $W:=\frac1k \sum_{i=1}^{k-T} (P^Tf)(v_i)$ crudely as:
$$ \|W\|_2 \le \frac1k \sum_{i=1}^{k-T}\|P^Tf(v_i)\|_2 \le
\frac{k-T}{k}\sum_{v\in V} \|(P^Tf)(v)\|_2\le \lambda^{T/2}F\le
\exp(-(1-\lambda) T/2)F,$$
where $F:=\sqrt{\sum_{v\in V}\|f(v)\|_2^2}$, by applying \eqref{eqn:shrink}.
Rearranging and setting $T=2\log(F/\epsilon)/(1-\lambda)$ yields the advertised
bound on $\|Z_i\|_*$.
\end{proof}


\section{Acknowledgments}
We thank S\'{e}bastien Bubeck, Yash Deshpande, Ravi Kannan, Zeph Landau, Jelani Nelson, Eric Price, Aviad Rubinstein, Avi Wigderson, David P. Woodruff, David Xiao and David Zuckerman for helpful conversations, as well
Microsoft Research India and the Simons Institute for the Theory of Computing,
where much of this work was carried out.

\bibliographystyle{alpha}

\appendix

\section{Elementary calculations}

\begin{lemma}\label{lem:h_map}
We define function $h(z) : \C \rightarrow \C$ as follows
\begin{align*}
h(z) = - \frac{1+z}{1-z} + \sqrt{ \left( \frac{1+z}{1-z} \right)^2 + 1 }
\end{align*}
Then function $h(z)$ maps the unit disk $\{ z\in \C : |z| \leq 1 \}$ to the half disk $\{ z \in \C : |z| \leq 1 \mathrm{~and~} \mathrm{Re}(z) \geq 0 \}$.
\end{lemma}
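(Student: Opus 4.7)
\medskip\noindent\textbf{Proof proposal for Lemma~\ref{lem:h_map}.} The plan is to factor $h$ as a composition of two well-understood conformal maps, namely $h = f_2 \circ f_1$, where
\[
f_1(z) := \frac{1+z}{1-z}, \qquad f_2(w) := -w + \sqrt{w^2+1},
\]
and the square root is taken as the principal branch. Once this is set up, the result follows from the assertions that (a) $f_1$ is a biholomorphism of the open unit disk $\mathbb{D}$ onto the open right half-plane $\mathbb{H}_+ := \{w \in \C : \mathrm{Re}(w) > 0\}$, and (b) $f_2$ is a biholomorphism of $\mathbb{H}_+$ onto the open half disk $\Omega := \{\zeta \in \C : |\zeta| < 1,\ \mathrm{Re}(\zeta) > 0\}$.

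Claim (a) is the classical Cayley transform and is standard. For claim (b), I will first check that $f_2$ is well-defined and holomorphic on $\mathbb{H}_+$: for $w = a + ib$ with $a > 0$, one has $w^2 + 1 = (a^2 - b^2 + 1) + 2ab\,i$, and this quantity is never a non-positive real (since $2ab = 0$ together with $a > 0$ forces $b = 0$, whence $w^2+1 = a^2+1 > 0$). Hence the principal branch is holomorphic throughout $\mathbb{H}_+$. Next I will compute the inverse. Starting from $\zeta = -w + \sqrt{w^2+1}$, rearranging gives $\sqrt{w^2+1} = \zeta + w$, and squaring yields $w = (1-\zeta^2)/(2\zeta)$. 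Writing $\zeta = \rho e^{i\theta}$ with $\rho > 0$, this becomes
\[
w \;=\; \frac{e^{-i\theta}}{2\rho} - \frac{\rho\, e^{i\theta}}{2}, \qquad \mathrm{Re}(w) \;=\; \frac{\cos\theta\,(1-\rho^2)}{2\rho}.
\]
Thus $\mathrm{Re}(w) > 0$ holds precisely when $\cos\theta > 0$ and $\rho < 1$, i.e., precisely when $\zeta \in \Omega$. This shows that $g(\zeta) := (1-\zeta^2)/(2\zeta)$ is a holomorphic bijection from $\Omega$ onto $\mathbb{H}_+$ with $g(f_2(w)) = w$, establishing that $f_2$ is a biholomorphism $\mathbb{H}_+ \to \Omega$.

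To conclude, I will verify that $f_2$ extends continuously to the imaginary axis and that the boundary maps correctly onto the boundary of $\Omega$: for $w = it$ with $|t| \le 1$, one gets $\zeta = \sqrt{1-t^2} - it$, which satisfies $|\zeta|=1$ and $\mathrm{Re}(\zeta) \ge 0$ (tracing the half-circle); for $|t| > 1$, $\sqrt{w^2+1}$ is purely imaginary and $\zeta$ lies on the imaginary diameter of the half disk. Composing with $f_1$ and its boundary correspondence $\partial \mathbb{D} \to i\mathbb{R} \cup \{\infty\}$ then shows that $h = f_2 \circ f_1$ carries $\overline{\mathbb{D}}$ into $\overline{\Omega}$, as claimed.

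The main obstacle will be being careful with the branch of the square root and the values on the imaginary axis $|t|>1$, where one of the two roots must be selected consistently so that the formula continues to represent the inverse of $g$; the expressions above automatically pick out the correct root because the principal branch has non-negative real part.
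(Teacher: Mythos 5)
Your proof is correct and takes a genuinely different route from the paper's. The paper directly solves $h(z)=x$ to extract the inverse $f(z)=h^{-1}(z)=\frac{z^2+2z-1}{z^2-2z-1}$ as a single rational function, and then verifies by brute-force polar-coordinate modulus computations (Claims~\ref{cla:z_inside_half_disk} and \ref{cla:z_on_boundary_of_half_disk}) that $|f|\le 1$ on the closed half disk and $|f|=1$ on its boundary. You instead factor $h=f_2\circ f_1$ through the right half-plane, reducing the problem to (a) the standard Cayley transform and (b) showing $f_2(w)=-w+\sqrt{w^2+1}$ is a biholomorphism $\mathbb{H}_+\to\Omega$, which you do cleanly by inverting to $g(\zeta)=(1-\zeta^2)/(2\zeta)$ and computing $\mathrm{Re}\,g(\rho e^{\i\theta})=\cos\theta\,(1-\rho^2)/(2\rho)$ to read off exactly when the image lands in $\mathbb{H}_+$. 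Your decomposition is conceptually cleaner and more reusable: the one-line real-part formula in polar coordinates replaces the paper's page of trigonometric expansion, and the structure of the map is visible rather than buried in algebra. (As a sanity check, $f_1^{-1}\circ g$ does recover the paper's $f$.) One small gap you share with the paper: both arguments gesture at, but do not fully spell out, surjectivity of the claimed biholomorphism. In your case this can be closed in one sentence by observing that the two preimages of any $w$ under $g$ are $\zeta$ and $-1/\zeta$, of which exactly one can lie in $\Omega$ since $|\zeta|<1$ forces $|-1/\zeta|>1$; combined with $g\circ f_2=\mathrm{id}$, this gives bijectivity. You are also right to flag the branch of $\sqrt{\,\cdot\,}$ on the imaginary axis with $|t|>1$, where the principal branch sits on its cut; this is a boundary issue and does not affect the interior biholomorphism, but a careful statement of the lemma would distinguish open domains (where the conformal map is clean) from the closed boundary correspondence.
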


\begin{proof}
We first compute the inverse of function $h(z)$, let $f = h^{-1}$. By definition of $h(z)$, we can do the following elementary calculations,
\begin{align*}
& ~ h(z) + \frac{1+z}{1-z} = \sqrt{ \left( \frac{1+z}{1-z} \right)^2 + 1 }\\
\implies & ~ \left(h(z) + \frac{1+z}{1-z}\right)^2 = \left( \frac{1+z}{1-z} \right)^2 + 1 \\
\implies & ~ (h(z))^2 + 2 h(z) \frac{1+z}{1-z}  = 1 \\
\implies & ~ (1-z) (h(z))^2+ 2 h(z) (1+z) = (1-z) \\
\implies & ~ (h(z))^2 + 2 h(z) - 1 = z( (h(z))^2 - 2h(z) -1 ),
\end{align*}
Thus, we obtain that
\begin{align*}
f(z) = \frac{z^2 +  2z -1}{z^2 - 2z -1}
\end{align*}
To finish the proof, we need Claim~\ref{cla:z_inside_half_disk} and Claim~\ref{cla:z_on_boundary_of_half_disk}.

\begin{claim}\label{cla:z_inside_half_disk}
For all $z \in \{ z \in \C : |z| \leq 1 \mathrm{~and~} \mathrm{Re}(z) \geq 0 \}$,
\begin{align*}
|f(z) | \leq 1
\end{align*}
\end{claim}
\begin{proof}
Let $z = r e^{\i \phi}$, where $r \in [0,1]$ and $\phi \in [-\pi/2,\pi/2]$. It is easy to observe that $\cos(\phi) \in [0,1]$ We have
\begin{align*}
|f(z)| = & ~ \left| \frac{z^2 + 2 z - 1}{z^2 - 2 z-1} \right| \\
= & ~ \left| \frac{r^2 e^{\i 2\phi} + 2 r e^{\i \phi} - 1 }{ r^2 e^{\i 2\phi} -2 r e^{\i \phi} -1  } \right| \\
= & ~ \frac{ | r^2 e^{\i 2\phi} + 2 r e^{\i \phi} - 1 | }{ | r^2 e^{\i 2\phi} -2 r e^{\i \phi} -1 | } 
\end{align*}

We can compute the numerator,
\begin{align*}
 | r^2 e^{\i 2 \phi} + 2 r e^{\i \phi} - 1 |^2
= & ~ | r^2 \cos 2\phi + \i r^2 \sin 2\phi  + 2 r \cos \phi + 2 r \i \sin \phi - 1 |^2 \\
= & ~   (r^2 \cos 2\phi + 2 r \cos \phi - 1)^2 + (r^2 \sin 2\phi + 2 r \sin \phi)^2  \\
= & ~   r^4 + 4r^2 + 1 - 2 r^2 \cos 2\phi + 4 r^3 \cos 2\phi \cos \phi - 4 r \cos \phi + 4 r^3 \sin2\phi \sin \phi   \\
= & ~   r^4 + 4r^2 + 1 - 2 r^2 \cos 2\phi + 4r^3 \cos \phi - 4 r \cos \phi.
\end{align*}
We can compute the denominator,
\begin{align*}
 |e^{\i 2 \phi} - 2 e^{\i \phi} - 1|^2
= & ~ | r^2 \cos 2\phi + \i r^2 \sin 2\phi  - 2 r \cos \phi - 2 r \i \sin \phi - 1 |^2 \\
= & ~   (r^2 \cos 2\phi - 2 r \cos \phi - 1)^2 + (r^2 \sin 2\phi - 2 r \sin \phi)^2  \\
= & ~   r^4 + 4r^2 + 1 - 2 r^2 \cos 2\phi - 4 r^3 \cos 2\phi \cos \phi + 4 r \cos \phi - 4 r^3 \sin2\phi \sin \phi   \\
= & ~   r^4 + 4r^2 + 1 - 2 r^2 \cos 2\phi - 4r^3 \cos \phi + 4 r \cos \phi.
\end{align*}
Note that, in order to show $|f(z)| \leq 1$, it is sufficient to prove
\begin{align*}
4 r^3 \cos \phi - 4 r\cos \phi \leq -4 r^3 \cos\phi + 4 r \cos \phi
\end{align*}
which is equivalent to
\begin{align*}
8r(1-r^2) \cos \phi \geq 0.
\end{align*}
It follows by definition of $r$ and $\phi$. Thus, we complete the proof.
\end{proof}

Next, we can show that
\begin{claim}\label{cla:z_on_boundary_of_half_disk}
For all $z$ is on the boundary of half disk, 
\begin{align*}
|f(z)| = 1.
\end{align*}
\end{claim}
\begin{proof}
First, we want to show that $\forall z \in [-\i,\i]$, $|f(z)| = 1$. Let $b \in [0,1]$, let $z= \i b$, then we have
\begin{align*}
|f(z)| = & ~ | f(\i b) | \\
= & ~ \left| \frac{-b^2 + 2b\i -1}{-b^2 - 2b \i -1} \right|\\
= & ~ 1.
\end{align*}
Second, we want to show that for all $z$ on half circle, $|f(z)| = 1$.
We replace $z$ by $e^{\i \phi}$, where $\phi \in [-\pi/2,\pi/2]$. Then we have
\begin{align*}
| f(z) | = & ~ \left| \frac{z^2+2z-1}{z^2 - 2 z -1} \right| \\
= & ~ \left| \frac{ e^{\i 2 \phi} + 2 e^{\i \phi} - 1 }{  e^{\i 2 \phi} - 2 e^{\i \phi} - 1 } \right|  \\ 
= & ~ \frac{ | e^{\i 2 \phi} + 2 e^{\i \phi} - 1 | }{ | e^{\i 2 \phi} - 2 e^{\i \phi} - 1  | } .
\end{align*}
We can compute the numerator,
\begin{align*}
 | e^{\i 2 \phi} + 2 e^{\i \phi} - 1 |
= & ~ | \cos 2\phi + \i \sin 2\phi  + 2  \cos \phi + 2\i \sin \phi - 1 | \\
= & ~ \left( (\cos 2\phi + 2 \cos \phi - 1)^2 + (\sin 2\phi + 2\sin \phi)^2 \right)^{1/2} \\
= & ~ \left( 6 - 2 \cos 2\phi + 4 \cos 2\phi \cos \phi - 4 \cos \phi + 4 \sin2\phi \sin \phi \right)^{1/2} \\
= & ~ \left( 6 - 2 \cos 2\phi + 4 \cos \phi - 4 \cos \phi \right)^{1/2} \\
= & ~ \left( 6 - 2 \cos 2\phi  \right)^{1/2}.
\end{align*}
We can compute the denominator,
\begin{align*}
 |e^{\i 2 \phi} - 2 e^{\i \phi} - 1| 
= & ~ | \cos 2\phi + \i \sin 2\phi  - 2  \cos \phi - 2\i \sin \phi - 1 | \\
= & ~ \left( (\cos 2\phi - 2 \cos \phi - 1)^2 + (\sin 2\phi - 2\sin \phi)^2 \right)^{1/2} \\
= & ~ \left( 6 - 2 \cos 2\phi - 4 \cos 2\phi \cos \phi + 4 \cos \phi - 4 \sin2\phi \sin \phi \right)^{1/2} \\
= & ~ \left( 6 - 2 \cos 2\phi + 4 \cos \phi - 4 \cos \phi \right)^{1/2} \\
= & ~ \left( 6 - 2 \cos 2\phi  \right)^{1/2}.
\end{align*}
 
Thus, we have
\begin{align*}
|f(z)| = 1
\end{align*}

\end{proof}
Note the biholomorphic is basically follows from the formula of $f$ and $h$, because they are composition of holomorphic function.
\end{proof}


\begin{lemma}\label{lem:theta_square}
For any $\rho \in [0,1]$ and $\cos \varphi \in [-1,0]$, we have
\begin{align*}
\frac{1-\rho}{1-\cos \varphi} - (1-\rho)^2 \leq \frac{1-\rho^2}{1-2\rho \cos \varphi + \rho^2} \leq \frac{1-\rho}{1-\cos \varphi} + 2(1-\rho)^2.
\end{align*}
\end{lemma}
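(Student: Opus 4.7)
}

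My plan is to reduce the two-sided inequality to a single explicit algebraic identity for the difference
\[
D(\rho,\varphi) \;:=\; \frac{1-\rho^2}{1-2\rho\cos\varphi+\rho^2} \;-\; \frac{1-\rho}{1-\cos\varphi},
\]
and then bound $D$ directly using the hypotheses $\rho\in[0,1]$ and $c:=\cos\varphi\in[-1,0]$. The payoff is that $c\le 0$ makes all denominators comfortably bounded away from $0$, so the estimate becomes essentially a check that the numerator scales as $(1-\rho)^2$.

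First, I will introduce the shortcuts $\delta:=1-\rho\in[0,1]$ and $c:=\cos\varphi\in[-1,0]$ and clear denominators. A short calculation (factor out $1-\rho$, expand $(1+\rho)(1-c)-(1-2\rho c+\rho^2)$, and collect terms in $\delta$) should give
\[
D \;=\; \frac{\delta^{2}\,(1-c-\delta)}{(1-c)\bigl[\,2(1-c)(1-\delta)+\delta^{2}\,\bigr]}.
\]
I expect this simplification to be the most error-prone step, since one must check that all the linear-in-$\delta$ terms cancel and only $\delta^{2}$ survives in the numerator. I will also rewrite $1-2\rho c+\rho^{2}=2(1-c)(1-\delta)+\delta^{2}$, which is the same computation used in the denominator above.

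Next I will bound numerator and denominator separately using $c\in[-1,0]$. For the numerator, $1-c-\delta$ lies in $[-c,\,1-c]\subseteq[0,2]$, so $0\le \delta^{2}(1-c-\delta)\le 2\delta^{2}$. For the denominator, $1-c\ge 1$ and
\[
2(1-c)(1-\delta)+\delta^{2}\;\ge\; 2(1-\delta)+\delta^{2} \;=\; 1+(1-\delta)^{2}\;\ge\; 1,
\]
hence $(1-c)\bigl[2(1-c)(1-\delta)+\delta^{2}\bigr]\ge 1$. Combining these two bounds yields
\[
0 \;\le\; D \;\le\; 2\delta^{2} \;=\; 2(1-\rho)^{2},
\]
which is in fact slightly stronger than what the lemma asks for: the lower bound $-(1-\rho)^{2}$ is free since $D\ge 0$, and the upper bound $2(1-\rho)^{2}$ is exactly the stated one. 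The main obstacle is really just the bookkeeping in the algebraic identity above; once that is in hand, the sign assumption $c\le 0$ does all the analytic work.
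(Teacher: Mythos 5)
Your proposed identity is correct: with $\delta = 1-\rho$, $c = \cos\varphi$, one indeed has $1-2\rho c+\rho^2 = 2(1-c)(1-\delta)+\delta^2$, and factoring out $\delta$ from $D$ and clearing denominators shows the remaining numerator equals $(2-\delta)(1-c) - 2(1-c)(1-\delta) - \delta^2 = (1-c)\delta - \delta^2 = \delta(1-c-\delta)$, yielding exactly your formula $D = \delta^2(1-c-\delta)\big/\bigl((1-c)[2(1-c)(1-\delta)+\delta^2]\bigr)$. The bounds then go through: $1-c-\delta\in[-c,1-c]\subseteq[0,2]$ and the denominator is at least $1$, so $0\le D\le 2\delta^2$. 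This is correct, and in fact slightly stronger than the stated lemma on the lower side ($D\ge 0$ versus $D\ge -(1-\rho)^2$). The paper takes a different route: it splits the two-sided bound into two separate claims (Claims~\ref{cla:theta_square_1} and~\ref{cla:theta_square_2}), clears denominators in each, uses $1-2\rho\cos\varphi+\rho^2\ge1$ to discard the cubic term, and then verifies a residual quadratic inequality by observing $(\rho-1)(\rho-\cos\varphi)\ge-2$. Your approach buys a single closed-form expression for the difference $D$, which makes the sign of $D$ manifest and gives the stronger one-sided conclusion for free, whereas the paper's version is more piecemeal and only establishes the weaker symmetric-looking bound. Both are elementary and entirely adequate for the application; yours is arguably cleaner and more informative.
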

\begin{proof}
This directly follows by combining Claim~\ref{cla:theta_square_1} and Claim~\ref{cla:theta_square_2}
\end{proof}

\begin{claim}\label{cla:theta_square_1}
There exists some sufficiently large constant $c\geq 1$ such that for any $\rho \in [0,1]$ and $\cos\varphi \in [-1,0]$, we have
\begin{align*}
\frac{1-\rho^2}{1-2\rho \cos \varphi + \rho^2} \leq \frac{1-\rho}{1-\cos \varphi} + c(1-\rho)^2.
\end{align*}
\end{claim}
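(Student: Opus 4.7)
The plan is to reduce the inequality to a simple algebraic identity via a change of variables, and then bound the resulting expression by a constant multiple of $(1-\rho)^2$.

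First, I would set $u := 1-\rho \in [0,1]$ and $w := 1 - \cos\varphi \in [1,2]$ (since $\cos\varphi \in [-1,0]$). Using these variables, a short computation gives $1 - \rho^2 = u(2-u)$ and $1 - 2\rho\cos\varphi + \rho^2 = (1-\rho)^2 + 2\rho(1-\cos\varphi) = u^2 + 2(1-u)w$, so the claim becomes
\begin{align*}
\frac{u(2-u)}{u^2 + 2(1-u)w} - \frac{u}{w} \leq c\,u^2.
\end{align*}

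Next, I would combine the two fractions on the left side over the common denominator $w[u^2 + 2(1-u)w]$. Expanding the numerator, almost everything cancels:
\begin{align*}
uw(2-u) - u[u^2 + 2(1-u)w] = u[2w - uw - u^2 - 2w + 2uw] = u^2(w - u).
\end{align*}
Thus the left-hand side equals $\dfrac{u^2(w-u)}{w[u^2 + 2(1-u)w]}$, and it suffices to show $\dfrac{w-u}{w[u^2 + 2(1-u)w]} \leq c$ for an absolute constant $c$.

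Finally, I would bound the factors separately. The numerator satisfies $w - u \leq w \leq 2$. For the denominator, note that $u^2 + 2(1-u)w$, viewed as a function of $u$ on $[0,1]$ with $w \geq 1$ fixed, has derivative $2u - 2w \leq 0$, so it is minimized at $u=1$ where it equals $1$; combined with $w \geq 1$ this gives $w[u^2 + 2(1-u)w] \geq 1$. Therefore the ratio is at most $2$, and the claim holds with $c = 2$ (which is in fact exactly what Lemma~\ref{lem:theta_square} requires on the upper side). The analogous lower bound in Claim~\ref{cla:theta_square_2} would be handled by the same computation, with the sign of the remainder controlled identically. No step here is a real obstacle; the only thing to watch is keeping track of signs when verifying that $w - u \geq 0$ throughout the parameter range, which follows from $u \leq 1 \leq w$.
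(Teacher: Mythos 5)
Your proof is correct and takes essentially the same approach as the paper: both reduce the claim to the single inequality $\rho - \cos\varphi \le c\,(1-\cos\varphi)(1-2\rho\cos\varphi+\rho^2)$ (equivalently $w - u \le c\,w[u^2+2(1-u)w]$ in your notation) and then observe that the numerator is at most $2$ while the denominator is at least $1$. Your change of variables $u = 1-\rho$, $w = 1-\cos\varphi$ and the exact identity
\begin{align*}
\frac{1-\rho^2}{1-2\rho\cos\varphi+\rho^2} - \frac{1-\rho}{1-\cos\varphi} \;=\; \frac{(1-\rho)^2(\rho-\cos\varphi)}{(1-\cos\varphi)(1-2\rho\cos\varphi+\rho^2)}
\end{align*}
make the computation somewhat more transparent than the paper's route of clearing denominators term by term. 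Incidentally, the paper's first displayed line in its proof (dividing the claim by $1-\rho$) has a slip: it writes $c(1+\rho)$ on the right where it should read $c(1-\rho)$, since $(1-\rho)^2/(1-\rho) = 1-\rho$. The error is harmless because the final reduction only uses $(\rho-1)(\rho-\cos\varphi) \ge -2$ and $(1-\cos\varphi)(1-2\rho\cos\varphi+\rho^2) \ge 1$, which works equally well with the corrected factor, but your derivation avoids the slip entirely.
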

\begin{proof}

It is equivalent to
\begin{align*}
\frac{1+\rho}{1-2\rho \cos \varphi + \rho^2} \leq & ~ \frac{1}{1-\cos \varphi} + c(1+\rho) \\
(1+\rho) (1-\cos \varphi) \leq & ~ 1-2\rho \cos \varphi + \rho^2 + c (1+\rho) (1-\cos\varphi) (1-2\rho \cos \varphi + \rho^2) \\
\rho - \cos \varphi \leq & ~ -\rho \cos \varphi + \rho^2 + c (1+\rho) (1-\cos\varphi) (1-2\rho \cos \varphi + \rho^2)
\end{align*}
which is equivalent to,
\begin{align*}
- \rho \cos \varphi + \rho^2 - \rho + \cos \varphi + c (1+\rho)(1- \cos \varphi) (1-2\rho \cos \phi + \rho^2) \geq 0
\end{align*}

Since $1-2\rho \cos\varphi + \rho^2 \geq 1$, thus it suffices to show
\begin{align*}
(\rho - 1 ) (\rho - \cos \varphi) + c (1+\rho) (1-\cos\varphi) \geq 0
\end{align*}
Note that $(\rho - 1) (\rho - \cos \varphi) \geq -2$, by choosing $c\geq 2$, we complete the proof.
\end{proof}

\begin{claim}\label{cla:theta_square_2}
There exists some sufficiently large constant $c\geq 1$ such that for any $\rho \in [0,1]$ and $\cos\varphi \in [-1,0]$, we have
\begin{align*}
\frac{1-\rho^2}{1-2\rho \cos \varphi + \rho^2} \geq \frac{1-\rho}{1-\cos \varphi} - c(1-\rho)^2.
\end{align*}
\end{claim}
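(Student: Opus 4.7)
The strategy mirrors the one used for Claim~\ref{cla:theta_square_1}: clear denominators and reduce the claim to a transparent algebraic inequality, exploiting the fact that $\cos\varphi \leq 0$ makes $1-2\rho\cos\varphi + \rho^2$ comfortably larger than $(1-\rho)^2$. If $\rho = 1$ both sides vanish, so I would assume $\rho < 1$ and divide through by $(1-\rho) > 0$, reducing the target to
$$\frac{1+\rho}{1-2\rho \cos\varphi + \rho^2} \;\geq\; \frac{1}{1-\cos\varphi} - c(1-\rho).$$

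Next, to make the expressions uniform I would set $a := 1 - \cos\varphi \in [1,2]$ and observe the identity
$$1 - 2\rho\cos\varphi + \rho^2 \;=\; (1-\rho)^2 + 2\rho a,$$
which is strictly positive. Cross-multiplying by $a \cdot \bigl((1-\rho)^2 + 2\rho a\bigr) > 0$ and collecting terms turns the inequality into
$$ a(1+\rho) - \bigl((1-\rho)^2 + 2\rho a\bigr) + c(1-\rho)\, a\, \bigl((1-\rho)^2 + 2\rho a\bigr) \;\geq\; 0.$$
The first two terms simplify using $a(1+\rho) - 2\rho a = a(1-\rho)$, giving $a(1-\rho) - (1-\rho)^2 = (1-\rho)\bigl(a - (1-\rho)\bigr)$, so after dividing out one more factor of $(1-\rho)$ the claim reduces to
$$ a - (1-\rho) \;+\; c\, a\, \bigl((1-\rho)^2 + 2\rho a\bigr) \;\geq\; 0.$$

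This final inequality is immediate, since $a \geq 1$ and $1-\rho \in [0,1]$ give $a - (1-\rho) \geq 0$, while the remaining term is visibly nonnegative for every $c \geq 0$. In particular there is no real obstacle: the conclusion already holds with $c = 0$, so the hypothesis $c \geq 1$ in the statement is amply satisfied. The only bookkeeping to be careful about is that every quantity divided by is strictly positive, which follows from $\cos\varphi \leq 0$ (so $a \geq 1$) together with the case split at $\rho = 1$.
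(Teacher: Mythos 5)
Your proof is correct and takes essentially the same route as the paper: factor out $(1-\rho)$, clear denominators, and reduce to an algebraic inequality whose two sides obviously have the right signs. Your version is tidier (the substitution $a=1-\cos\varphi$ and the identity $1-2\rho\cos\varphi+\rho^2=(1-\rho)^2+2\rho a$ keep the bookkeeping clean, and it correctly handles the $\rho=1$ boundary case), and you make explicit the observation that the inequality already holds with $c=0$; the paper's displayed manipulations contain a couple of sign/factor typos ($+c(1+\rho)$ where one expects $-c(1-\rho)$) but ultimately reach the same trivially-nonnegative final form and settle for $c=1$.
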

\begin{proof}

It is equivalent to
\begin{align*}
\frac{1+\rho}{1-2\rho \cos \varphi + \rho^2} \geq & ~ \frac{1}{1-\cos \varphi} + c(1+\rho) \\
(1+\rho) (1-\cos \varphi) \geq & ~ 1-2\rho \cos \varphi + \rho^2 - c (1+\rho) (1-\cos\varphi) (1-2\rho \cos \varphi + \rho^2) \\
\rho - \cos \varphi \geq & ~ -\rho \cos \varphi + \rho^2 - c (1+\rho) (1-\cos\varphi) (1-2\rho \cos \varphi + \rho^2)
\end{align*}
which is equivalent to,
\begin{align*}
- \rho \cos \varphi + \rho^2 - \rho + \cos \varphi - c (1+\rho)(1- \cos \varphi) (1-2\rho \cos \phi + \rho^2) \leq 0
\end{align*}
which is equivalent to
\begin{align*}
(\rho-1) (\rho -\cos\varphi) \leq c(1+\rho) (1-\cos\varphi) (1-2\rho \cos\varphi + \rho^2)
\end{align*}
It suffices to choose $c=1$
\end{proof}

\end{document}